\newtheorem{theorem}{Theorem}[section]
\newtheorem{definition}{Definition}[section]
\newtheorem{proposition}[theorem]{Proposition}
\newtheorem{lemma}[theorem]{Lemma}
\newtheorem{corollary}[theorem]{Corollary}
\newtheorem{claim}[theorem]{Claim}
\newtheorem{remark}[theorem]{Remark}
\newtheorem{theoremalph}{Theorem}
\newtheorem{corollaryalph}{Corollary}
 \def\NN{{\mathbb N}} 
 \def\RR{{\mathbb R}}
\def \F{\EuScript{F}}
\def \M{\mathcal{M}}
\def \N{\mathcal{N}}
\def \T{\mathcal{T}}
\def\N {\Bbb N}
\def\M {{\mathcal M}}
\def\F {{\mathcal F}}
\def\P{\mathcal P}
\def \E{\mathscr{E}}
\def \F{\mathscr{F}}
\def \N{\mathscr{N}}
\begin{document}

\newpage
\title[SRB measures for partially hyperbolic flows]
{SRB measures for partially hyperbolic flows with mostly expanding center}	
\author{Zeya Mi}
	
\address{School of Mathematics and Statistics,
Nanjing University of Information Science and Technology, Nanjing 210044, Jiangsu, P.R. China.}

\email{\href{mailto:mizeya@nuist.edu.cn}{mizeya@nuist.edu.cn}, \href{mailto:mizeya@163.com}{mizeya@163.com}}

\author[Biao You]{Biao You}

\address{Department of Mathematics, Soochow University, Suzhou 215006, Jiangsu, P.R. China.}

\email{\href{mailto:sudayoubiao@163.com}{sudayoubiao@163.com}}

\author{Yuntao Zang}
\address{Department of Mathematics, Soochow University, Suzhou 215006, Jiangsu, P.R. China.}
\address{Laboratoire de Math\'ematiques d'Orsay, CNRS - Universit\'e Paris-Sud, Orsay 91405, France}
\email{\href{mailto:yuntaozang@suda.edu.cn}{yuntaozang@suda.edu.cn}}


\thanks{Z.Mi\ was partially supported by NSFC 11801278}
\thanks{}

\date{\today}

\maketitle

\begin{abstract}
We prove that a partially hyperbolic attractor for a $C^1$ vector field with two dimensional center supports an SRB measure. In addition, we show that if the vector field is $C^2$, and the center bundle admits the \emph{sectional expanding} condition w.r.t. any Gibbs $u$-state, then the attractor can only support finitely many SRB/physical measures whose basins
cover Lebesgue almost all points of the topological basin. 
The proof of these results has to deal with the difficulties which do not occur in the case of diffeomorphisms.
\end{abstract}

\maketitle

\section{Introduction and main results}
In the 1960s and 1970s, Sinai, Ruelle and Bowen \cite{Sin72,BoD75, Rue76,Bow77} systematically studied the chaotic behaviors on uniform hyperbolic systems by using statistical mechanics and smooth ergodic theory. The central idea is the construction of a natural invariant measure called SRB measures(see \cite{Y02}). The existence and finiteness of SRB measures for uniformly hyperbolic systems are well understood. Researchers then focus on more general settings like partially hyperbolic systems and other non-uniformly hyperbolic systems. There are various non-uniform conditions that are proposed to get the existence and finiteness of SRB measures (see e.g. \cite{AlB00,BoV00,APP,CaY16, ADL17,LY17,MCY17,AnC18,A19}) both on diffeomorphisms and flows.
In this paper, we are interested in the problem on the existence and finiteness of the SRB measures for partially hyperbolic flows when the center bundle has low dimension or admits some hyperbolicity.

Let $\phi$ be the flow generated by a $C^1$ vector field $X$ on a compact Riemannian manifold $M$. A Borel probability measure $\mu$ is $\phi$-invariant if $\mu(\phi_t(A))=\mu(A)$ for every measurable subset $A$ and $t\in \RR$. Roughly speaking, we say a probability measure is an SRB measure if it is a $\phi$-invariant measure with entropy being equal to the sum of its positive Lyapunov exponents (see Definition \ref{SRB}). 

We mainly study the existence and finiteness of SRB measures supported on \emph{attractors}.
A compact subset $\Lambda\subset M$ is an attractor if there exists an open neighborhood $U$ of $\Lambda$ such that
$\phi_{t}(U)\subset \overline{U}$ for every $t\ge 0$ and $\Lambda=\bigcap_{t\ge0}\phi_{t}(U).$

\begin{theoremalph}\label{TheA}
Let $X$ be a $C^{1}$ vector field on a compact manifold $M$, and let  $\Lambda$ be an attractor with a partially hyperbolic splitting $T_{\Lambda}M=E^{ss}\oplus E^{c}\oplus E^{uu}$. If $\dim E^{c}=2,$ then there is an SRB measure supported on $\Lambda$.
\end{theoremalph}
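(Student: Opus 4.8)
The plan is to produce the SRB measure as a suitable Gibbs $u$-state, using crucially that $\dim E^{c}=2$ makes the ``effective'' center one–dimensional: since the flow direction $\mathbb{R}X$ has no exponential growth, domination forces $\mathbb{R}X\subseteq E^{c}$ (we take the attractor to be non-singular, the natural setting when $E^{uu}$ is nontrivial), and the quotient line bundle $N^{c}:=E^{c}/\mathbb{R}X$ is a one–dimensional continuous $D\phi_{t}$–invariant bundle along $\Lambda$. First I would set up the standard objects: starting from a disk $D$ tangent to the strong unstable bundle inside the trapping region, normalizing Lebesgue measure $\mathrm{Leb}_{D}$, forming the Ces\`aro averages $\mu_{T}=\frac1T\int_{0}^{T}(\phi_{t})_{*}\mathrm{Leb}_{D}\,dt$ and passing to a weak-$*$ limit, one obtains a nonempty, compact, convex, $\phi$–invariant set $\mathcal{G}^{u}$ of Gibbs $u$-states, i.e.\ invariant measures with absolutely continuous conditionals along strong unstable plaques. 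Almost every ergodic component of a Gibbs $u$-state is again one, so $\mathcal{G}^{u}$ has ergodic extreme points; moreover any ergodic Gibbs $u$-state $\nu$ whose positive Lyapunov exponents are all carried by $E^{uu}$ is SRB, since its Pesin unstable manifold for $\phi_{1}$ coincides a.e.\ with the strong unstable manifold, and the lower entropy bound $h_{\phi_{1}}(\nu)\ge\int\log|\det D\phi_{1}|_{E^{uu}}|\,d\nu$ valid for Gibbs $u$-states of $C^{1}$ partially hyperbolic systems, together with Ruelle's inequality, forces $h_{\phi_{1}}(\nu)=\sum\lambda_{i}^{+}(\nu)$.

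The structural point that drives the argument is that, $N^{c}$ being a line bundle, the induced cocycle on it is exactly multiplicative, so $t\mapsto\int\log\|D\phi_{t}|_{N^{c}}\|\,d\nu$ is linear for every invariant $\nu$; hence the integrated center exponent $\nu\mapsto c(\nu):=\int\lambda^{c}\,d\nu=\int\log\|D\phi_{1}|_{N^{c}}\|\,d\nu$ is continuous in the weak-$*$ topology. Now split into two cases. \textbf{Case A:} some ergodic Gibbs $u$-state $\mu$ has $c(\mu)\le 0$; then $\mu$ has no positive exponent outside $E^{uu}$, so by the previous paragraph $\mu$ is SRB and we are done. \textbf{Case B:} every ergodic Gibbs $u$-state has strictly positive center exponent. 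Combining continuity of $c$ on the compact set $\mathcal{G}^{u}$ with the ergodic decomposition, there is $c^{*}>0$ with $c(\nu)\ge c^{*}$ for \emph{all} $\nu\in\mathcal{G}^{u}$; that is, the center is uniformly mostly expanding.

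In Case B I would carry out the Alves--Bonatti--Viana hyperbolic-times construction in the flow setting. A compactness argument (every limit of empirical averages of $\mathrm{Leb}_{D}$ is a Gibbs $u$-state) shows Lebesgue-a.e.\ point of a strong unstable disk has lower center exponent $\ge c^{*}$; a Pliss-type lemma in continuous time then yields, for a positive–measure subset of the disk, a positive density set of hyperbolic times at which $E^{ss}$ is backward-contracted and $N^{c}$ is uniformly expanded. Replacing $D$ by a disk tangent to the center-unstable cone field $\mathbb{R}X\oplus N^{c}\oplus E^{uu}$ (so the needed directions are present from the outset), at such times the iterate contains center-unstable disks of a fixed radius with uniformly bounded curvature and distortion; averaging the normalized Lebesgue measures these carry over the hyperbolic times and extracting a weak-$*$ limit $\mu$ gives an invariant measure with absolutely continuous conditionals along center-unstable manifolds. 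Being in particular a Gibbs $u$-state, $\mu$ satisfies $c(\mu)\ge c^{*}>0$, so $\mu$-a.e.\ the Pesin unstable manifold of $\phi_{1}$ fills the whole $(\dim E^{uu}+1)$–dimensional plaque spanned by $E^{uu}\oplus N^{c}$ inside $W^{cu}$; a Fubini slicing along the neutral flow direction upgrades absolute continuity from $W^{cu}$ to this unstable manifold, and the $C^{1}$ entropy formula for center-unstable Gibbs states (available under domination) plus Ruelle's inequality give $h_{\phi_{1}}(\mu)=\sum\lambda_{i}^{+}(\mu)$, i.e.\ $\mu$ is SRB.

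The main obstacle is Case B, and the difficulties there are precisely the flow-specific ones absent for diffeomorphisms. Because the neutral flow direction lies inside $E^{c}$ and $E^{c}$ does not split invariantly as $\mathbb{R}X\oplus N^{c}$, every piece of the one–dimensional-center toolkit — Oseledets exponent, cone fields, Pliss/hyperbolic times, the geometry and distortion of iterated disks — must be run on the quotient bundle $N^{c}$ while the flow direction is carried along passively inside the ambient disks: it participates in the geometry and in the Fubini/disintegration arguments but not in the expansion estimates, and this mismatch has to be reconciled at each step. On top of this sits the merely $C^{1}$ regularity, which rules out classical Pesin theory and forces one to extract both the bounded-distortion control along center-unstable plaques and the entropy formula for the limit measure from domination rather than from Hölder derivatives. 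I expect the construction and distortion control of the center-unstable disks for a $C^{1}$ flow — keeping the flow direction under control throughout — to be the crux of the proof.
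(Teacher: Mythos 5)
Your overall skeleton agrees with the paper's: since $\dim E^{c}=2$ and the flow direction lies in $E^{c}$ with zero Lyapunov exponent, each ergodic Gibbs $u$-state carries exactly one ``free'' center exponent, and one splits on its sign; in the non-positive case the Gibbs $u$-state property plus Ruelle's inequality (Lemma \ref{Ruelle inequality}) already force $h_{\nu}(\phi_{1})=\int\sum\lambda^{+}d\nu$. (One small omission: you \emph{assume} $\Lambda$ is non-singular, whereas this must be proved --- Lemma \ref{System is not a singularity} does so by noting that a singularity $\sigma\in\Lambda$ would put $X(x)\in E^{uu}(x)$ for $x\in W^{uu}(\sigma)\setminus\{\sigma\}$, making $\|X(\phi_{t}(x))\|=\|D\phi_{t}X(x)\|$ grow exponentially, contradicting boundedness of $X$.)

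The genuine gap is in your treatment of the positive-exponent case, and to a lesser extent in your construction of Gibbs $u$-states. You propose to realize Gibbs $u$-states as weak-$*$ limits of pushed-forward Lebesgue measure with absolutely continuous conditionals along $\mathcal{F}^{u}$, and in Case B to run the Alves--Bonatti--Viana hyperbolic-times scheme to produce center-unstable disks of uniform size with ``uniformly bounded curvature and distortion.'' Every one of these steps rests on bounded distortion of the unstable (or center-unstable) Jacobian, hence on H\"older continuity of $D\phi_{t}$, which a $C^{1}$ vector field does not provide; you name this obstruction yourself as ``the crux'' but do not resolve it, and it is not resolvable along these lines --- this is precisely why the $C^{1}$ theory (Cowieson--Young, Crovisier--Yang--Zhang) replaces absolute continuity by entropy identities. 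The paper avoids the entire construction: for Lebesgue-a.e.\ $x\in B(\Lambda)$ and every $\mu\in\mathcal{M}(x)$ it invokes the two $C^{1}$ entropy estimates of Crovisier--Yang--Zhang, namely $h_{\mu}(\phi_{1},\mathcal{F}^{u})=\int\log|\det D\phi_{1}|_{E^{uu}}|\,d\mu$ (Lemma \ref{lemme 3.2}) and $h_{\mu}(\phi_{1})\ge\int\log|\det D\phi_{1}|_{E^{c}\oplus E^{uu}}|\,d\mu$ (Lemma \ref{lemma 3.3}); the latter holds for any dominated splitting with no positivity or uniformity hypothesis, so when the ergodic components have non-negative center exponents it combines with Ruelle's inequality to give the SRB property directly --- no hyperbolic times, no uniform lower bound $c^{*}$ over all Gibbs $u$-states, and no distortion control. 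Your Case B should therefore be replaced by (or reduced to) these entropy inequalities; as written, the distortion-based construction is exactly the step that fails in the $C^{1}$ category.
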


For partially hyperbolic systems with one dimensional center, it was showed by
Cowieson-Young \cite{Cow05} and Crovisier-Yang-Zhang \cite{CDJ20} that any attractor admits an SRB measure. 
Theorem \ref{TheA} suggests that the condition of one dimensional center bundle for diffeomorphisms can be relaxed to two dimensional center bundle for flows. Different to \cite{Cow05}, where they use the tool of random perturbations, we mainly borrow some ideas from \cite{CDJ20} on the argument of dominated entropy formulas.

Let $\phi$ be the flow generated by a $C^1$ vector field $X$. A $\phi$-invariant probability measure $\mu$ is called a \emph{physical measure} if its basin
	$$ 
	B(\mu)=\Big\{   x\in M:\ \lim\limits_{T\to \infty}\frac{1}{T}\int_{0}^{T}\delta_{\phi_{t}(x)}dt=\mu \Big\}
	$$ 
has positive Lebesgue measure. SRB measures are sometimes expected to be physical. 
A well-known fact is that any ergodic hyperbolic SRB measure is a physical measure when the vector field $X$ is $C^2$. 

\medskip
We will also study the finiteness of SRB/physical measures for partially hyperbolic flows, for this we will add some hyperbolicity of the center bundle.


Let $\phi_{1}$ be the time one map of the flow $\phi$. Assume that $\Lambda$ is a partially hyperbolic attractor exhibiting strong unstable direction $E^{uu}$. Let $h_{\mu}(\phi_1,\mathcal{F}^{u})$ be the entropy along unstable foliation. For more detail, see Definition \ref{GS}. A good candidate of SRB measures in many systems is called \emph{Gibbs $u$-state}. 
\begin{definition}
	An invariant probability measure $\mu$ is called a Gibbs $u$-state if $$h_{\mu}(\phi_1,\mathcal{F}^{u})= \int \log|\det D\phi_{1}|_{E^{uu}}|d\mu.$$
\end{definition}

Let us remark that this kind of definition of Gibbs $u$-state was proposed by previous works \cite{CDJ20, MC20}. By \cite{L84}, when the system is $C^2$, $\mu$ is a Gibbs $u$-state iff the conditional measures of $\mu$ along strong unstable manifolds are absolutely continuous w.r.t. the Lebesgue measures. The latter property is the original definition of Gibbs $u$-states introduced by \cite{PeS82}.

We next propose a non-uniform condition on Gibbs $u$-states.

\begin{definition}\label{Gibbs expanding}
Let $E$ be an invariant sub-bundle of $T_{\Lambda}M$. We say that $E$ is \emph{Gibbs sectional expanding} if ${\rm dim}E^c>1$ and for every Gibbs $u$-state $\mu$ supported on $\Lambda$, we have
$$
\lim_{t\to +\infty}\frac{1}{t}\log \left|{\rm det} D \phi_t|_{L_x}\right|>0
$$
for every two-dimensional subspace $L_x\subset E_x$ and $\mu$-almost every $x$.
\end{definition}

Note that the above limit exists by Oseledets theorem \cite[Theorem S.2.9]{Kat95}.
Under the condition of Gibbs sectional expanding on the center bundle, we can show the finiteness of SRB/physical measures.

\begin{theoremalph}\label{TheB}
Let $\phi$ be a $C^{2}$ flow on a compact manifold $M$ and let  $\Lambda$ be an attractor with a partially hyperbolic splitting $T_{\Lambda}M=E^{ss}\oplus E^{c}\oplus E^{uu}$. Assume that $E^{c}$ is Gibbs sectional expanding. Then there are finitely many physical measures (which are SRB) supported on $\Lambda$ and their basins cover a full Lebesgue measure subset of the topological basin $B(\Lambda)$\footnote{$
B(\Lambda)=\{x: \omega(x)\subset \Lambda\},$ where $\omega(x)$ is $\omega$-limit set of $x$.} of $\Lambda$.
%
\end{theoremalph}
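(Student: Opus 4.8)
The plan is to follow the classical Bonatti--Viana / Alves--Bonatti--Viana strategy for obtaining finitely many physical measures from Gibbs $u$-states, adapted to the flow setting via the time-one map $\phi_1$. First I would recall that since $\Lambda$ is a partially hyperbolic attractor and $X$ is $C^2$, the space of Gibbs $u$-states is nonempty, convex, weak-$*$ compact, and invariant under $\phi_1$; moreover by the absolute continuity of the conditional measures along $\mathcal{F}^u$ (the Ledrappier--Young characterization cited in the excerpt), any ergodic component of a Gibbs $u$-state is again a Gibbs $u$-state, and the ergodic Gibbs $u$-states are exactly the extreme points. The key structural input is that the basin of a Gibbs $u$-state contains, up to a Lebesgue-null set, a whole neighborhood in the strong-unstable plaques it charges; this is where absolute continuity along $\mathcal{F}^u$ together with absolute continuity of the strong-unstable holonomy is used. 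So the argument reduces to (i) showing every ergodic Gibbs $u$-state is a physical measure and is SRB, and (ii) showing there are only finitely many of them and that their basins exhaust $B(\Lambda)$ up to Lebesgue measure.

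For step (i), the Gibbs sectional expanding hypothesis on $E^c$ is exactly what upgrades a Gibbs $u$-state to a genuine hyperbolic (in the nonuniform sense, transverse to the flow direction) SRB measure. Indeed, let $\mu$ be an ergodic Gibbs $u$-state. Along $E^{uu}$ all Lyapunov exponents are positive by partial hyperbolicity; along $E^{ss}$ they are negative; along $E^c$, the Gibbs sectional expanding condition forces the sum of any two Lyapunov exponents in $E^c$ to be positive, hence at most one exponent in $E^c$ can be $\le 0$, and if $\dim E^c$ is even it forces all center exponents positive while if $\dim E^c$ is odd it forces all but possibly one to be positive. Combined with the flow direction (exponent zero), this pins down the Oseledets splitting: away from the flow line, $\mu$ has no zero exponents. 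The entropy formula (Gibbs $u$-state property plus the Ledrappier--Young / Ruelle inequality applied to $E^c$ and $E^{ss}$) then gives $h_\mu(\phi_1)=\sum_{\lambda_i>0}\lambda_i\dim$, i.e. $\mu$ is SRB in the sense of Definition \ref{SRB}; and the standard argument that an ergodic hyperbolic SRB measure for a $C^2$ system is physical (using Pesin theory, the absolute continuity of stable and unstable laminations, and that the measure already sits on a Gibbs $u$-state) shows $B(\mu)$ has positive Lebesgue measure. The one technical point specific to flows, which does not appear for diffeomorphisms, is handling the neutral flow direction when passing from the time-one map to the flow and when building local product structure: one works in the normal bundle / on cross-sections, or equivalently notes that the flow direction contributes a trivial factor and the hyperbolicity transverse to $X$ is genuine.

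For step (ii), finiteness follows by a now-standard density/covering argument: if there were infinitely many ergodic Gibbs $u$-states $\mu_n$, one extracts a weak-$*$ limit $\mu_\infty$, which is again a Gibbs $u$-state, and analyzes its ergodic decomposition; using that each $B(\mu_n)$ contains (mod zero) a uniform-size family of strong-unstable plaques with positive transverse measure and that these basins are pairwise disjoint, one derives a contradiction with finiteness of Lebesgue measure — or, more robustly, one uses the argument that distinct ergodic Gibbs $u$-states have disjoint basins each of which ``traps'' a full strong-unstable disk, and only finitely many disjoint such disks of definite size fit. Then, to see the basins cover $B(\Lambda)$ Lebesgue-a.e.: take any point $x$ with $\omega(x)\subset\Lambda$; its forward orbit spends, in the Cesàro sense, most of its time near $\Lambda$, so Lebesgue-a.e. such $x$ lies (via absolute continuity of the unstable holonomy from an unstable plaque through a density point of some Gibbs $u$-state, and the fact that $m$-a.e. point has a well-defined forward Birkhoff limit which must be an invariant measure that is a Gibbs $u$-state) in the basin of one of the finitely many ergodic Gibbs $u$-states. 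I would assemble this from the $C^2$ structure: Lebesgue-a.e. $x\in B(\Lambda)$ has an iterate whose unstable plaque meets the set of ``good'' points of some ergodic Gibbs $u$-state in a positive-measure set, and absolute continuity of holonomies propagates the physical-measure conclusion back to $x$.

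The main obstacle I expect is the flow-specific bookkeeping in step (i)--(ii): controlling the neutral central direction $\RR X$ when defining SRB/physical measures and when invoking absolute continuity of the invariant laminations, since the naive time-one-map reduction does not quite capture that the relevant hyperbolicity lives transverse to the flow; and, relatedly, ensuring that the Gibbs sectional expanding condition — which a priori only controls $2$-dimensional subspaces of $E^c$ — genuinely removes all zero center exponents (this is immediate when $\dim E^c$ is even but requires the extra observation above when $\dim E^c$ is odd, where one center exponent may vanish and must be shown to be absorbed into the flow direction or otherwise not obstruct the entropy formula). Making the passage from ``positivity of $\det D\phi_t|_{L_x}$ along $2$-planes'' to ``the transverse dynamics is nonuniformly hyperbolic with the right entropy formula'' precise and uniform enough to run the finiteness argument is the technical heart of the proof.
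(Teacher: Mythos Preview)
Your step (i) reaches the right conclusion but the Lyapunov-exponent bookkeeping is needlessly convoluted; the parity discussion is a red herring. The clean argument (Lemma~\ref{nz} in the paper) is: since $\Lambda$ has no singularities (Lemma~\ref{System is not a singularity}), the flow direction $\langle X\rangle$ sits inside $E^c$ and carries exponent zero; for any $v\in E^c_x\setminus\langle X(x)\rangle$ the $2$-plane $L_x=\langle v,X(x)\rangle$ has $\lim\frac{1}{t}\log|\det D\phi_t|_{L_x}|>0$, and since the exponent along $X$ vanishes this forces the exponent along $v$ to be positive. So every center exponent except the one along the flow is strictly positive, with no case analysis needed.

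Your step (ii), however, has a genuine gap, and it is exactly the difficulty the paper is built to overcome. You propose that each ergodic Gibbs $u$-state ``traps a full strong-unstable disk'' of definite size and then count disjoint disks. But a strong-unstable disk saturated by $W^{ss}$ and the flow direction has codimension $\dim E^c-1\ge 1$, hence Lebesgue measure zero; it gives no lower bound on $\mathrm{Leb}(B(\mu))$. What is actually needed is a \emph{uniform} lower bound on the size of Pesin unstable disks tangent to $E^{uu}\oplus(E^c\ominus\langle X\rangle)$, and Pesin theory alone does not supply this uniformity across different ergodic measures. The weak-$*$ limit sketch you offer does not produce it either. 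The paper notes explicitly (end of \S1) that the diffeomorphism device of iterating $cu$-disks fails here because the neutral flow direction lies inside $E^c$. Its substitute is: pass to the linear Poincar\'e flow $\psi_t$ on the normal bundle $\mathscr{N}=\mathscr{N}^s\oplus\mathscr{N}^c\oplus\mathscr{N}^u$; prove a uniform estimate $\int\log\|\psi_{-T}|_{\mathscr{N}^c}\|\,d\mu<-\eta$ for \emph{all} Gibbs $u$-states simultaneously (Proposition~\ref{Gibbs estimate}, via compactness of the Gibbs $u$-state space); then, instead of working with $\mu$-typical points, invoke Katok periodic approximation (Lemma~\ref{Katok shadowing}) to find a hyperbolic periodic orbit $\gamma$ homoclinically related to $\mu$ whose periodic measure inherits the same estimate. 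A Pliss selection plus a uniform unstable-manifold lemma for the Poincar\'e return map (Lemma~\ref{dim estimate}) yields a transverse unstable disk on $\gamma$ of size $\delta$ independent of $\mu$, and the $\lambda$-lemma transports a Pesin disk of $\mu$ onto one $C^1$-close to it; saturating gives $\mathrm{Leb}(B(\mu))\ge\zeta>0$ uniformly, hence finiteness. The basin-covering is then obtained not by holonomy as you suggest but from Lemmas~\ref{lemme 3.2}--\ref{lemma 3.3}: for Lebesgue-a.e.\ $x\in B(\Lambda)$ every $\mu\in\mathcal{M}(x)$ is already a Gibbs $u$-state, and one argues (Claim~\ref{cl}) that $\mathcal{M}(x)$ must be a singleton among the finitely many $\mu_i$.
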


When $\phi$ is transitive, then we get the uniqueness of SRB (physical) measures.

\begin{corollaryalph}\label{uniqueness}
Under the assumption of Theorem \ref{TheB}, if we assume that $\phi$ is transitive on $\Lambda$, then there exists a unique SRB measure (which is physical) supported on $\Lambda$, whose basin covers a full Lebesgue measure subset of the topological basin $B(\Lambda)$.
\end{corollaryalph}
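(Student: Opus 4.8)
The plan is to deduce Corollary \ref{uniqueness} from Theorem \ref{TheB} by showing that transitivity on $\Lambda$ forces the finitely many physical measures produced there to coalesce into a single one. First I would invoke Theorem \ref{TheB} to obtain physical measures $\mu_1,\dots,\mu_k$ supported on $\Lambda$, all of which are SRB, whose basins cover $B(\Lambda)$ up to a Lebesgue-null set. Each $\mu_i$ is a Gibbs $u$-state (being an SRB measure for a partially hyperbolic attractor in the $C^2$ category, its conditional measures along strong unstable manifolds are absolutely continuous), hence its support is saturated by entire strong unstable manifolds: $\mathrm{supp}\,\mu_i \supset \mathcal{F}^{u}(x)$ for every $x\in\mathrm{supp}\,\mu_i$. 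This is the standard ``$u$-saturation'' property of Gibbs $u$-states and is the geometric input I would lean on most heavily.

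Next I would argue that, because $\phi$ is transitive on $\Lambda$, the strong-unstable saturation of any one of these supports must be all of $\Lambda$. Concretely, $\mathrm{supp}\,\mu_i$ is a compact $\phi$-invariant set that is $\mathcal{F}^{u}$-saturated; by a standard accessibility/minimality argument (closing up strong unstable plaques under the flow and using density of a forward orbit), such a set coincides with $\Lambda$. Therefore $\mathrm{supp}\,\mu_i=\mathrm{supp}\,\mu_j=\Lambda$ for all $i,j$. The main obstacle is then to upgrade ``equal supports'' to ``equal measures'': two distinct ergodic SRB measures can in principle share the same support. Here I would use that the basins $B(\mu_i)$ are pairwise disjoint (up to null sets) and jointly of full Lebesgue measure in $B(\Lambda)$, combined with absolute continuity of the conditional measures along $\mathcal{F}^{u}$. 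Picking a disintegration along a local strong unstable foliation box inside $B(\Lambda)$, each $B(\mu_i)$ meets Lebesgue-almost every unstable plaque in a set of positive leaf-Lebesgue measure (since $\mu_i$ is $u$-absolutely continuous and $\mathrm{supp}\,\mu_i=\Lambda$ meets the box); but the forward dynamics along a single unstable leaf cannot have its Birkhoff averages converging to two different limits on two positive-measure subsets once one invokes the fact that a.e. point of a leaf lies in the basin of the Gibbs $u$-state whose conditionals that leaf carries. This contradiction forces $k=1$.

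I would organize the write-up as follows. Step 1: apply Theorem \ref{TheB} to get $\mu_1,\dots,\mu_k$, recording that each is an ergodic Gibbs $u$-state. Step 2: prove $u$-saturation of each $\mathrm{supp}\,\mu_i$ and, using transitivity, conclude $\mathrm{supp}\,\mu_i=\Lambda$ for all $i$. Step 3: show via the absolute continuity of unstable conditionals and the full-Lebesgue-measure covering of $B(\Lambda)$ that two such measures with full support must coincide, hence $k=1$; denote the unique measure $\mu$. Step 4: observe $B(\mu)$ has full Lebesgue measure in $B(\Lambda)$, which is exactly the assertion, and note $\mu$ is physical by construction in Theorem \ref{TheB}.

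The step I expect to be the genuine obstacle is Step 3, the passage from coinciding supports to coinciding measures; the cleanest route is probably to avoid it entirely by instead arguing at the level of basins from the outset. Alternatively one can phrase Step 3 as: the (nonempty, open mod $0$) sets $B(\mu_i)$ together with the invariance of the Gibbs $u$-state property under the ergodic decomposition show that the set of Gibbs $u$-states is a simplex whose extreme points have pairwise disjoint $u$-saturated supports; since all these supports equal $\Lambda$, there is only one extreme point. I would present whichever of these is shortest given the lemmas already available in the body of the paper, but in all cases the crux is exploiting that an $\mathcal{F}^{u}$-saturated invariant compact subset of a transitive partially hyperbolic attractor is the whole attractor.
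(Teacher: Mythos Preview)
Your Step 2 has a genuine gap: transitivity of $\phi|_\Lambda$ does \emph{not} in general force a nonempty compact $\phi$-invariant $\mathcal{F}^u$-saturated subset to be all of $\Lambda$. That conclusion is the property of $u$-minimality, which is strictly stronger than transitivity. Concretely, if $K=\mathrm{supp}\,\mu_i$ and $z\in\Lambda$ has dense orbit, nothing prevents $z\in\Lambda\setminus K$; then $\mathrm{Orb}(z)\subset\Lambda\setminus K$ shows only that $K$ is nowhere dense, not that $K=\Lambda$. So the ``standard accessibility/minimality argument'' you invoke does not go through from transitivity alone, and with it collapses the route through equal supports (which, as you yourself note, would still leave the hard part of Step 3 unresolved).

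The paper bypasses supports entirely and follows exactly the alternative you flag at the end: argue at the level of basins from the outset. The point is that the proof of Theorem \ref{TheB} already manufactures, for each ergodic physical measure $\mu_i$, an \emph{open} set $U_i\subset M$ with $U_i\cap\Lambda\neq\emptyset$ and $\mathrm{Leb}(U_i\setminus B(\mu_i))=0$ (this is the set $\mathrm{int}\,\widehat{D}_{\mu_i,\delta}$ built from the uniform-size unstable disk, flowed and $s$-saturated). Transitivity then gives some $t$ with $\phi_t(U_i)\cap U_j\neq\emptyset$; since basins are $\phi$-invariant and have full Lebesgue measure in these open sets, $B(\mu_i)\cap B(\mu_j)$ has positive Lebesgue measure, forcing $\mu_i=\mu_j$. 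This is a two-line argument once Theorem \ref{TheB} is in hand, and it uses neither $u$-saturation of supports nor any leafwise disintegration reasoning. Your instinct that the basin route is cleanest is correct; make it the main argument rather than an afterthought.
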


The Gibbs sectional expanding condition may be seen as a local version of the usual sectional expanding condition in the definition of singular hyperbolic attractor(see \cite{MM08,Z08}). The existence and finiteness of physical measures for singular hyperbolic attractors has been obtained earlier in \cite{APP,LY17,A19} under different settings. We emphasize that for singular hyperbolic attractors, the main difficulty is the possibility of existence of singularities, which is different to ours(see Lemma \ref{System is not a singularity}).

Indeed, the Gibbs sectional expanding condition is much similar to the mostly expanding condition on partially hyperbolic diffeomorphisms, where it requires that every Gibbs $u$-state admits only positive central Lyapunov exponents. For mostly expanding diffeomorphism, in the previous works \cite{AnC18, MCY17}, the authors have proved the finiteness of physical measures.

The key idea of establishing the finiteness of physical measures in Theorem \ref{TheB} is to show the existence of `typical Pesin unstable manifolds' with uniform (lower) size in which Lebesgue almost every point belongs to the basin of the corresponding ergodic physical measures. This is done for diffeomorphisms considered in \cite{AnC18, MCY17} by iterating some special disks tangent to the center unstable direction. Due to the involvement of flow direction, we cannot achieve this goal just by studying the time one map of the flow. 
In contrast to the previous strategy, we use the argument on periodic approximation and some uniform estimate on central direction to find hyperbolic periodic orbit $\gamma_{\alpha}$ w.r.t. each (ergodic) physical measure $\mu_{\alpha}$ such that 
\begin{itemize}
\item each $\gamma_{\alpha}$ is homoclinically related $\mu_{\alpha}$-typical points;
\smallskip
\item the unstable manifolds of these periodic orbits admit uniform size.
\end{itemize} 
This implies the existence of typical unstable manifolds with uniform size. Let us remark that the philosophy of using hyperbolic periodic orbit to analyze physical measures appears in recent works for the case of diffeomorphism (see e.g. \cite{HCMP,dvy16,yjg19,MC20}).

\bigskip
{\bf{Acknowledgements.}} We would like to express our gratitude to Prof. Dawei Yang for his instructive and useful suggestions. We also thank Jinhua Zhang and Rui Zou for many helpful discussions.

\section{Preliminary}
Throughout this section, $X$ denotes a $C^1$ vector field on a compact Riemannian manifold $M$. Let $\phi$ be the flow generated by $X$. A point $\sigma\in M$ is called a \emph{singularity} of $X$ if $X(\sigma)=0$. Denote by ${\rm Sing} (X)$ the collection of singularities of $X$.

\subsection{Lyapunov exponents and SRB measures}
A subset $A$ is called $\phi$-invariant if $\phi_t(A)=A$ for any $t\in \RR$. One says that a $\phi$-invariant measure $\mu$ is ergodic if any $\phi$-invariant measurable subset has $\mu$-measure one or zero.

Given a $\phi$-invariant measure $\mu$, the Oseledets theorem states that for $\mu$-almost every point $x$, there exist $k=k(x)\in \NN$, finitely many real numbers $\lambda_{1}(x)<\lambda_{2}(x)<\cdots<\lambda_{k}(x)$, and a measurable (w.r.t. $x$) invariant splitting 
$$T_{x}M=E_{1}(x)\oplus \cdots\oplus E_k(x)$$
such that for every $1\le i \le k$,
$$\lim\limits_{t\to \pm\infty} \frac{1}{t} \log \| D\phi_{t}(x)(v) \|=\lambda_{i}(x),\ \forall \ v\in E_{i}(x)\setminus\{0\}.$$ 

These numbers $\lambda_{1}(x)<\lambda_{2}(x)<\cdots<\lambda_{k}(x)$ are called the \emph{Lyapunov exponents} of $\mu$ at $x$. Note that if $\mu$ is ergodic, then the Lyapunov exponents are constant for $\mu$-almost every $x$. It can be easily showed that the Lyapunov exponents w.r.t. $\phi$ above are indeed, equal to Lyapunov exponents w.r.t. the diffeomorphism $\phi_{1}$(the time one map of $\phi$).

Let $h_{\mu}(\phi):=h_{\mu}(\phi_1)$ be the metric entropy of $\mu$ w.r.t. $\phi$.
\begin{definition}\label{SRB}
We say a $\phi$-invariant measure $\mu$ is an SRB measure if 
$$
h_{\mu}(\phi)=\int \sum\lambda^{+}(x)d\mu(x),
$$ 
where $\sum\lambda^{+}(x)$ is the sum of all the positive Lyapunov exponents counted with multiplicity.
\end{definition}

The following classical Ruelle inequality provides a connection between entropy and Lyapunov exponents.

\begin{lemma}\label{Ruelle inequality}
	\cite[Theorem 2]{Rue78} 
Let $\mu$ be a $\phi$-invariant measure, then 
$$
h_{\mu}(\phi)\le \int \sum\lambda^{+}(x)d\mu(x).
$$ 
\end{lemma}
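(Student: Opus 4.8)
The statement is the classical Ruelle inequality, and I would prove it by the standard argument, adapted trivially to flows. Since $h_\mu(\phi)$ is by definition $h_\mu(\phi_1)$ and, as remarked above, the Lyapunov exponents of $\mu$ for $\phi$ agree with those for the $C^1$ diffeomorphism $f:=\phi_1$, it suffices to establish the inequality for $f$: for every $f$-invariant Borel probability $\mu$ on the compact manifold $M$ (with $d:=\dim M$),
$$
h_\mu(f)\ \le\ \int \sum\lambda^+(x)\,d\mu(x),\qquad \sum\lambda^+(x)=\sum_{i=1}^d\lambda_{(i)}^+(x),
$$
where $\lambda_{(1)}(x)\ge\cdots\ge\lambda_{(d)}(x)$ are the Lyapunov exponents at $x$ repeated with multiplicity and $a^+:=\max\{a,0\}$. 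No ergodicity assumption is needed, since the Oseledets splitting and the exponents are defined pointwise on a full-measure set for every invariant measure.

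On the entropy side I would use only soft facts. From $h_\mu(f)=h_\mu(f^{-1})=\tfrac1n h_\mu(f^{-n})$ and the elementary bound $h_\mu(T,\mathcal Q)\le H_\mu(T\mathcal Q\mid\mathcal Q)$ (valid for any finite partition $\mathcal Q$ and any invertible measure-preserving $T$, by subadditivity of conditional entropy), one gets, for every $n\ge1$ and every finite partition $\mathcal Q$,
$$
h_\mu(f)\ \le\ \frac1n\,H_\mu\!\big(f^{-n}\mathcal Q\ \big|\ \mathcal Q\big)\ \le\ \frac1n\int \log\#\big\{Q'\in\mathcal Q:\ Q'\cap f^{n}(Q_x)\neq\emptyset\big\}\,d\mu(x),
$$
where $Q_x$ is the atom of $\mathcal Q$ containing $x$; the second inequality holds because a fixed atom $Q_x$ of $\mathcal Q$ meets at most $\#\{Q':f^{-n}(Q')\cap Q_x\neq\emptyset\}=\#\{Q':Q'\cap f^n(Q_x)\neq\emptyset\}$ atoms of $f^{-n}\mathcal Q$, i.e. at most the number of $\mathcal Q$-atoms needed to cover the forward image $f^n(Q_x)$. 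I would then let $\mathcal Q$ run through an increasing sequence of ``regular'' partitions $\mathcal Q_k$ (built from dyadic cubes in finitely many charts) whose meshes $\delta_k\downarrow0$ and which generate the Borel $\sigma$-algebra mod $\mu$; for such a sequence $h_\mu(f^{-n})=\lim_k h_\mu(f^{-n},\mathcal Q_k)$, so it is enough to bound the right-hand side for $\mathcal Q=\mathcal Q_k$ with $k$ large.

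The geometric heart of the proof is the covering estimate, which I expect to be the only delicate point. Fix $n$. Since $f^n$ is $C^1$ and $M$ is compact, $Df^n$ is uniformly continuous, so there is $\varepsilon_n>0$ such that whenever $\delta<\varepsilon_n$ and $x\in M$, the image $f^n(B(x,\delta))$ lies in the $O(\delta)$-neighborhood of the ellipsoid $\exp_{f^n(x)}\!\big(Df^n_x\,B(0,\delta)\big)$ (write $f^n$ in normal coordinates at $x$ and at $f^n(x)$, and control the linearization error by the oscillation of $Df^n$ on $B(x,\delta)$). That fattened ellipsoid has semi-axes of length $\le(\sigma_i(Df^n_x)+C)\delta$, where $\sigma_1(Df^n_x)\ge\cdots\ge\sigma_d(Df^n_x)$ are the singular values of $Df^n_x$, hence is covered by at most $C_d\prod_{i=1}^d\max\{1,\sigma_i(Df^n_x)\}$ balls of radius $\delta$, with $C_d$ depending only on $d$. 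Taking $\mathcal Q_k$ of mesh $\delta_k<\varepsilon_n$ and using that a regular partition of mesh $\delta_k$ has bounded overlap with $\delta_k$-balls, we obtain, for $\mu$-a.e. $x$,
$$
\#\big\{Q'\in\mathcal Q_k:\ Q'\cap f^n(Q_x)\neq\emptyset\big\}\ \le\ C_d'\prod_{i=1}^d\max\{1,\sigma_i(Df^n_x)\}.
$$

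Combining the two displays yields, for every $n$,
$$
h_\mu(f)\ \le\ \frac{\log C_d'}{n}\ +\ \int \frac1n\sum_{i=1}^d\log^+\sigma_i(Df^n_x)\,d\mu(x).
$$
Now I would invoke the multiplicative ergodic theorem in its singular-value form: for $\mu$-a.e. $x$ and each $i$, $\tfrac1n\log\sigma_i(Df^n_x)\to\lambda_{(i)}(x)$, hence $\tfrac1n\log^+\sigma_i(Df^n_x)\to\lambda_{(i)}^+(x)$; moreover these functions are bounded above by the constant $\log^+\!\big(\max_M\|Df\|\big)$ uniformly in $n$ and $x$. Letting $n\to\infty$ and using dominated convergence (equivalently, Fatou applied to the nonnegative functions $\log^+(\max_M\|Df\|)-\tfrac1n\log^+\sigma_i(Df^n_x)$), the right-hand side tends to $\int\sum_i\lambda_{(i)}^+(x)\,d\mu(x)=\int\sum\lambda^+(x)\,d\mu(x)$, which is the asserted inequality. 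The main obstacle, as already indicated, is the covering lemma: one must be careful that the scale $\delta$ is chosen after $n$ is frozen (the linearization is uniformly accurate only on a scale depending on $n$), and that the partition scale is matched to the radius of the covering balls — which is precisely why a regular family of partitions, rather than an arbitrary one, is used.
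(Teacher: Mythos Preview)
The paper does not prove this lemma; it is stated as a citation to Ruelle's original paper \cite{Rue78} and used as a black box. Your proposal is essentially Ruelle's own argument---reduce to the time-one diffeomorphism, bound the conditional entropy $H_\mu(f^{-n}\mathcal Q\mid\mathcal Q)$ by a covering count, control the covering count of $f^n(Q_x)$ via the singular values of $Df^n_x$ (using that the partition mesh is chosen small after $n$ is fixed), and pass to the limit using the singular-value form of Oseledets plus dominated convergence. This is correct and standard.

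One small expository slip: the displayed inequality ``$h_\mu(f)\le\tfrac1n H_\mu(f^{-n}\mathcal Q\mid\mathcal Q)$ for every finite partition $\mathcal Q$'' is false as written (take $\mathcal Q$ trivial); what you actually have is $h_\mu(f^{-n},\mathcal Q)\le H_\mu(f^{-n}\mathcal Q\mid\mathcal Q)$, and you recover $h_\mu(f)=\tfrac1n h_\mu(f^{-n})$ only after taking a generating sequence $\mathcal Q_k$. Your subsequent sentence shows you understand this, so it is a matter of phrasing, not a gap in the argument.
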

\begin{remark}
	In particular, if $\mu$ is an ergodic measure, the integral in the right side of the inequality above can be removed (because the Lyapunov exponents of an ergodic measure are constant for almost every $x$). 	
\end{remark}

The following is a flow version of the classical ergodic decomposition theorem. It is parallel to the discrete-time version which can be found in the book \cite[Chapter II.6]{Ma}. 

\begin{proposition}\label{Ergodic decomposition of flow}
Let $\phi$ be a $C^1$ flow on a compact manifold $M$.
There exists a measurable subset $\Sigma\subset M$ with total measure ($\mu(\Sigma)=1$, for any invariant measure $\mu$) such that 
\begin{itemize}
\item for every $x\in \Sigma$, the measure
$$
\eta_x=\lim_{T\to +\infty}\frac{1}{T}\int_0^T \delta_{\phi_s(x)}ds
$$
is well defined and ergodic;
\item
every $\phi$-invariant measure has the ergodic decomposition:
$$
\mu=\int_{\Sigma} \eta_{x} d{\mu}.
$$
\end{itemize}

%
\end{proposition}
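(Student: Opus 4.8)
The plan is to reduce the statement to the discrete-time ergodic decomposition theorem applied to the time-one map $\phi_1$ and then to ``flow-symmetrize'' the resulting ergodic components. First I would apply \cite[Chapter II.6]{Ma} to the homeomorphism $\phi_1$: this produces a Borel set $\Sigma'\subset M$ with $\nu(\Sigma')=1$ for every $\phi_1$-invariant probability $\nu$ (hence for every $\phi$-invariant measure, these being $\phi_1$-invariant), such that for $x\in\Sigma'$ the limit $\eta'_x:=\lim_{N\to\infty}\frac1N\sum_{k=0}^{N-1}\delta_{\phi_k(x)}$ exists weak-$*$, is $\phi_1$-invariant and $\phi_1$-ergodic, and $\nu=\int_{\Sigma'}\eta'_x\,d\nu(x)$ for every $\phi_1$-invariant $\nu$. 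Since $\phi_1$ commutes with $\phi_s$, one has $(\phi_s)_*\eta'_x=\eta'_{\phi_s(x)}$ whenever $x,\phi_s(x)\in\Sigma'$, and $(\phi_1)_*\eta'_x=\eta'_x$. Next I would set $\Sigma:=\{x\in M:\ \phi_s(x)\in\Sigma'\text{ for Lebesgue-a.e. }s\in\RR\}$; a Fubini argument using $\phi$-invariance shows $\Sigma$ is a $\phi$-invariant Borel set with $\mu(\Sigma)=1$ for every $\phi$-invariant $\mu$. For $x\in\Sigma$ define $\eta_x:=\int_0^1\eta'_{\phi_s(x)}\,ds$, a well-defined Borel probability measure. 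Using the $1$-periodicity of $s\mapsto\eta'_{\phi_s(x)}$ (from $(\phi_1)_*\eta'_y=\eta'_y$), the relation $(\phi_t)_*\eta'_{\phi_s(x)}=\eta'_{\phi_{s+t}(x)}$, and a change of variables gives $(\phi_t)_*\eta_x=\eta_x$, so $\eta_x$ is $\phi$-invariant.

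For ergodicity, let $A$ be a $\phi$-invariant Borel set; then $A$ is $\phi_1$-invariant, so $\eta'_y(A)\in\{0,1\}$ for $y\in\Sigma'$ by $\phi_1$-ergodicity, and $\phi_s(A)=A$ forces $\eta'_{\phi_{s'}(x)}(A)=\eta'_{\phi_s(x)}(A)$ for a.e.\ $s,s'$ (apply $(\phi_{s'-s})_*$). Hence $\eta_x(A)=\int_0^1\eta'_{\phi_s(x)}(A)\,ds\in\{0,1\}$, i.e.\ $\eta_x$ is $\phi$-ergodic. To identify $\eta_x$ with the time average in the statement, fix $x\in\Sigma$, put $m_x:=\int_0^1\delta_{\phi_s(x)}\,ds$ (so $m_x(\Sigma')=1$), write $T=N+r$ with $N=\lfloor T\rfloor$, $r\in[0,1)$, and group $\int_0^T\delta_{\phi_s(x)}\,ds$ over unit intervals to obtain $\frac1T\int_0^T\delta_{\phi_s(x)}\,ds=\frac NT\cdot\frac1N\sum_{k=0}^{N-1}(\phi_k)_*m_x+\frac1T\int_N^{N+r}\delta_{\phi_s(x)}\,ds$. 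The last term has mass $r/T\to 0$; testing the main term against $f\in C(M)$ and using dominated convergence together with $\frac1N\sum_{k<N}\delta_{\phi_k(\phi_s(x))}\to\eta'_{\phi_s(x)}$ for a.e.\ $s$ yields $\frac1N\sum_{k<N}(\phi_k)_*m_x\to\int_0^1\eta'_{\phi_s(x)}\,ds=\eta_x$. Hence $\lim_{T\to+\infty}\frac1T\int_0^T\delta_{\phi_s(x)}\,ds=\eta_x$, exactly the measure claimed in the statement.

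Finally, for the decomposition formula, let $\mu$ be $\phi$-invariant. As $\mu$ is $\phi_1$-invariant, $\mu=\int_{\Sigma'}\eta'_x\,d\mu(x)$; and as $\mu$ is $\phi$-invariant, $\mu=\int_0^1(\phi_s)_*\mu\,ds$. Combining these, using $(\phi_s)_*\eta'_x=\eta'_{\phi_s(x)}$ ($\mu\times ds$-a.e.) and Fubini applied to continuous test functions, gives $\mu=\int_0^1\!\int\eta'_{\phi_s(x)}\,d\mu(x)\,ds=\int\big(\int_0^1\eta'_{\phi_s(x)}\,ds\big)\,d\mu(x)=\int_\Sigma\eta_x\,d\mu(x)$. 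I expect the main obstacle to be the bookkeeping in the second and third steps above: ensuring the good set $\Sigma$ is genuinely $\phi$-invariant and Borel so that $\mu(\Sigma)=1$ holds \emph{simultaneously} for all invariant $\mu$, and verifying that the continuous-time Birkhoff average actually converges — this is precisely where the passage from $\phi_1$ to the flow takes place, and it rests on a dominated-convergence argument rather than directly on the ergodic theorem, since $m_x$ need not be $\phi_1$-invariant.
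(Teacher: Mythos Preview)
The paper does not actually prove this proposition: it merely states it and remarks that it ``is parallel to the discrete-time version which can be found in \cite[Chapter II.6]{Ma}.'' Your proposal is therefore not competing with any argument in the paper; rather, it is a careful filling-in of the reduction the authors implicitly have in mind, namely applying Ma\~n\'e's discrete ergodic decomposition to $\phi_1$ and then averaging over $s\in[0,1]$ to recover flow-invariance and flow-ergodicity. The main steps --- constructing the flow-invariant full-measure set $\Sigma$ by Fubini, showing $\eta_x=\int_0^1\eta'_{\phi_s(x)}\,ds$ is $\phi$-invariant and $\phi$-ergodic, identifying $\eta_x$ with the continuous-time Birkhoff limit via the decomposition $T=N+r$ and dominated convergence, and deducing the integral decomposition of $\mu$ --- are all correct and standard. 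Your own closing caveat about the bookkeeping (Borel measurability of $\Sigma$, simultaneous total measure, and the passage from discrete to continuous Birkhoff averages via $m_x$) identifies exactly the places where care is needed, and your sketch handles them adequately; one small point worth making explicit is that $\Sigma'$ should be taken $\phi_1$-invariant (always possible after discarding a null set) so that $\phi_s(x)\in\Sigma'$ for a.e.\ $s\in[0,1]$ automatically upgrades to a.e.\ $s\in\RR$.
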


\subsection{Partially hyperbolic flows}

We say a compact $\phi$-invariant subset $\Lambda$ admits a \emph{dominated splitting} if there is a $D\phi_t$-invariant continuous splitting $T_{\Lambda}M=E\oplus F$ and two constants $C>0, \lambda>0$ such that
$$
\| D\phi_{t}|_{E_x}\| \cdot \parallel D\phi_{-t}|_{F_{\phi_{t}(x)}}\|\le  C{\rm e}^{-\lambda t}, \quad \forall\, t>0, \forall\,x\in \Lambda 
.$$
In this case, we call $F$ \emph{dominates} $E$, and denote it by $E\oplus_{\prec}F$.

Now we give the definition of partial hyperbolicity.

\begin{definition}\label{partially hyperbolic of flow}
We say a compact invariant subset $\Lambda$ admits a \emph{partially hyperbolic splitting} if there is a $D\phi_t$-invariant continuous splitting 
$$
T_{\Lambda}M=E^{ss}\oplus E^c \oplus E^{uu}
$$ such that
\begin{itemize}
\smallskip
\item $E^{uu}$ dominates $E^{c}\oplus E^{ss}$ and $E^{c}\oplus E^{uu}$ dominates $E^{ss},$
\smallskip
\item $E^{ss}$ is uniformly contracting and $E^{uu}$ is uniformly expanding, i.e., there are two constants $C>0$ and $\lambda>0$ such that for any $x\in \Lambda$ and $t>0$,
\begin{itemize}
\smallskip
\item $\|D\phi_{t}(v)\|\le C{\rm e}^{-\lambda t}\| v\|$, $\forall v \in E^{ss},$
\smallskip
\item $\|D\phi_{-t}(v)\|\le C{\rm e}^{-\lambda t}\| v\|$, $\forall v\in E^{uu}.$
\end{itemize}
\end{itemize} 

\end{definition}

\begin{remark}
	Write $E^{cs}=E^{c}\oplus E^{ss}$, $E^{cu}=E^{c}\oplus E^{uu}$. $E^{cs}$ is called the central stable direction and $E^{cu}$ is called the central unstable direction. Note by definition that if the flow has no singularity, then the flow direction (i.e., $<X>$) must be contained in $E^{c}$. 
\end{remark}

\subsection{Entropy along unstable foliation}

In this subsection, we introduce the definition of the entropy along unstable foliation.

Let $\Lambda$ be an attractor with partially hyperbolic splitting $T_{\Lambda}M=E^{cs}\oplus_{\prec} E^{uu}$. We denote the strong unstable manifold at $x\in \Lambda$ by $W^{uu}(x)$. The strong \emph{unstable lamination} on $\Lambda$ is denoted by 
$$
\mathcal{F}^{u}=\{W^{uu}(x),\ x\in \Lambda\}.
$$

\begin{definition}
Assume that $\mu$ is an invariant probability measure supported on $\Lambda$.	
A measurable partition $\alpha$ is \emph{$\mu$-subordinate to $\mathcal{F}^{u}$}, if for $\mu$-a.e. $x$, 
\begin{itemize}
	\smallskip
	\item $\alpha(x)\subset W^{uu}(x)$,
	\smallskip	
	\item $\alpha(x)$ contains an open neighborhood of $x$ inside $W^{uu}(x)$ w.r.t. the intrinsic topology on $W^{uu}(x)$.
\end{itemize} 
\end{definition}
\begin{remark}
	For the definition of measurable partition and its conditional measures, see \cite{Rok67}. The existence of subordinate partitions is showed in \cite{LeY85}. 
\end{remark}

\begin{definition}\label{GS}
	The entropy of $\mu$ along the unstable foliation $\mathcal{F}^{u}$ is defined by
	$$h_{\mu}(\phi_{1},\mathcal{F}^{u})=h_{\mu}(\phi_{1},\alpha)$$ 
	where $\alpha$ is a measurable partition $\mu$-subordinate to $\mathcal{F}^{u}$ and $\phi_{1}$ is the time one map of the flow $\phi$.
\end{definition}

\begin{remark}$ $
\begin{itemize} 
\smallskip
	\item It is showed in \cite{LeY85} that $h_{\mu}(\phi_{1},\mathcal{F}^{u})$ does not depend on the choice of subordinate partitions. Hence the definition above is well defined.
	\smallskip
	\item By definition, $h_{\mu}(\phi_{1},\mathcal{F}^{u})\leq h_{\mu}(f)$ where $h_{\mu}(f)$ is the entropy of $\mu$.
\end{itemize}
\end{remark}

\section{Existence of SRB measures} 
The proof of Theorem \ref{TheA} will be given in this section. To begin with, let us state the following two results. They establish the entropy formulas with respect to limit measures generated by the iterations of Lebesgue almost every point.

For each $x\in M$, take
$$
\mathcal{M}(x)=\Big\{\mu: \exists \ T_{n}\to \infty~\textrm{such that}~\mu=\lim\limits_{n\to \infty}\frac{1}{T_{n}} \int_{0}^{T_{n}} \delta_{\phi_{t}(x)}dt \Big\}.
$$

\begin{lemma}\label{lemme 3.2}\cite[Theorem C]{CDJ20}
Assume that $\phi$ is a flow generated by a $C^{1}$ vector field $X$. If an attractor $\Lambda$ of $\phi$ has a partially hyperbolic splitting $$T_{\Lambda}M=E^{ss}\oplus E^{c}\oplus E^{uu},$$ then for Lebesgue almost every point $x$ of $B(\Lambda)$, and for any $\mu\in \mathcal{M}(x)$, we have	
\begin{equation}\label{ue}
h_{\mu}(\phi_{1},\mathcal{F}^{u})= \int \log|\det D\phi_{1}|_{E^{uu}}|d\mu,
\end{equation}
\end{lemma}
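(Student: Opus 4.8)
Since the statement is quoted from \cite{CDJ20}, we only sketch one natural line of proof.

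\emph{Reduction to a lower bound.} Put $J^{u}:=|\det D\phi_{1}|_{E^{uu}}|$, a continuous positive function on $\Lambda$, and set
\[
\Delta(\nu):=\int\log J^{u}\,d\nu-h_{\nu}(\phi_{1},\mathcal F^{u}).
\]
The inequality $\Delta(\nu)\ge 0$, i.e. $h_{\nu}(\phi_{1},\mathcal F^{u})\le\int\log J^{u}\,d\nu$, is a Ruelle-type inequality \emph{along} the unstable foliation and holds for every $\phi$-invariant $\nu$ supported on $\Lambda$: it uses only that $E^{uu}$ is continuous and that each leaf of $\mathcal F^{u}$ is a $C^{1}$ disk on which $\phi_{1}$ acts as an expanding map with Jacobian $J^{u}$. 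Moreover $\Delta$ is affine and upper semicontinuous on the compact set of $\phi$-invariant probabilities on $\Lambda$. Hence the lemma reduces to showing that, for Lebesgue-a.e. $x$ in a filtrating neighbourhood $U$ of $\Lambda$ and every $\mu\in\mathcal M(x)$, one has $\Delta(\mu)=0$.

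\emph{Disintegrating Lebesgue along unstable plaques.} Cover $\overline U$ by finitely many foliation boxes for a plaque family tangent to $E^{uu}$: the plaques are $C^{1}$ disks depending continuously on their base point, and the transverse holonomies are absolutely continuous with densities bounded above and below on each box. In such a box, $\mathrm{Leb}|_{U}=\int\mathrm{Leb}_{D}\,d\hat m(D)$, where $\mathrm{Leb}_{D}$ is, up to a bounded density, the induced Riemannian volume of the plaque $D$. By Fubini it is enough to prove: for $\hat m$-a.e. plaque $D$ and $\mathrm{Leb}_{D}$-a.e. $x\in D$, every $\mu\in\mathcal M(x)$ satisfies $\Delta(\mu)=0$.

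\emph{Pushed-forward volume is asymptotically a Gibbs $u$-state.} Fix such a plaque $D$ and let $\nu_{D}$ be a weak-$*$ limit of $\nu_{D,T}:=\tfrac1T\int_{0}^{T}(\phi_{t})_{*}\big(\mathrm{Leb}_{D}/\mathrm{Leb}_{D}(D)\big)\,dt$ along some $T_{j}\to\infty$; it is $\phi$-invariant and carried by $\Lambda$. The key $C^{1}$ estimate is that for any point $w$ on an unstable plaque and any Bowen ball $B^{u}_{n}(w,\varepsilon)=\{z:\ d(\phi_{k}w,\phi_{k}z)\le\varepsilon,\ 0\le k\le n\}$ inside it, the distortion $\prod_{k=0}^{n-1}J^{u}(\phi_{k}z)/J^{u}(\phi_{k}w)$ of $\phi_{n}$ on $B^{u}_{n}(w,\varepsilon)$ is at most $\exp(n\,\omega(\varepsilon))$, where $\omega$ is a modulus of continuity of the continuous function $\log J^{u}$ — the Bowen ball controls \emph{all} intermediate iterates; similarly, by the uniform backward contraction of $E^{uu}$, the density of $(\phi_{t})_{*}\mathrm{Leb}_{D}$ over such a small ball is almost constant, up to a factor $\exp\big(\sum_{j\ge 1}\omega(Ce^{-\lambda j}\varepsilon)\big)$ which tends to $1$ as $\varepsilon\to 0$. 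This is the only point where $C^{1}$, rather than $C^{1+\alpha}$, regularity is used, and it is harmless since $\varepsilon$ is sent to $0$ after $n\to\infty$. Feeding this, together with $\mathrm{Leb}^{u}(B^{u}_{n}(w,\varepsilon))\asymp\varepsilon^{\dim E^{uu}}\exp\big(-\sum_{k<n}\log J^{u}(\phi_{k}w)\big)$ (up to the above subexponential corrections), into the Brin--Katok description of $h_{\nu_{D}}(\phi_{1},\mathcal F^{u})$ and using Birkhoff's theorem $\tfrac1n\sum_{k<n}\log J^{u}(\phi_{k}\cdot)\to\int\log J^{u}\,d\nu_{D}$ $\nu_{D}$-a.e., one obtains $h_{\nu_{D}}(\phi_{1},\mathcal F^{u})\ge\int\log J^{u}\,d\nu_{D}$, hence $\Delta(\nu_{D})=0$.

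\emph{From plaques back to individual orbits, and the main difficulty.} It remains to descend from $\Delta(\nu_{D})=0$ to $\Delta(\mu)=0$ for $\mathrm{Leb}_{D}$-a.e. $x\in D$ and all $\mu\in\mathcal M(x)$. Since $\mathrm{Leb}_{D}=\int_{D}\delta_{y}\,d\mathrm{Leb}_{D}(y)$, one has $\nu_{D,T}=\int_{D}\mu_{y,T}\,d(\mathrm{Leb}_{D}/\mathrm{Leb}_{D}(D))(y)$ with $\mu_{y,T}=\tfrac1T\int_{0}^{T}\delta_{\phi_{t}y}\,dt$. After an Egorov/density reduction to a positive-measure subset of $D$ along which a common subsequence $T_{j}$ can be extracted, a measurable selection of limits $\mu_{y}\in\mathcal M(y)$ together with the affineness of $\Delta$ under such integral decompositions gives $0=\Delta(\nu_{D})=\int\Delta(\mu_{y})\,d(\ldots)(y)$; since $\Delta\ge 0$ this forces $\Delta(\mu_{y})=0$ a.e., and a covering/contradiction argument ranging over all of $\mathcal M(y)$ and all sub-subsequences yields the claim for \emph{every} $\mu\in\mathcal M(y)$. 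I expect this last transfer to be the main obstacle: one must control the empirical measures of Lebesgue-typical \emph{individual} orbits uniformly enough along a plaque to pass from the clean entropy estimate obtained for the averaged pushed-forward volume to each single orbit, which is precisely where the measurable choice of the subsequences and the upper semicontinuity of $\Delta$ have to be handled with care.
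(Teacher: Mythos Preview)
The paper does not prove this lemma; it is quoted from \cite[Theorem~C]{CDJ20} and used as a black box throughout, so there is no in-paper argument to compare against.

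Your outline captures the architecture of the argument in \cite{CDJ20}: the Ruelle-type inequality along $\mathcal F^{u}$ gives $\Delta\ge0$; upper semicontinuity and affineness of $\Delta$ reduce matters to a lower bound; and the $C^{1}$ volume estimate on $u$-plaques shows that averaged push-forwards of leaf Lebesgue measure accumulate on Gibbs $u$-states. One small point on step~2: you should make explicit that the plaque family is a \emph{smooth} auxiliary foliation of $U$ by disks tangent to an unstable cone, so that Fubini is immediate; the genuine strong-unstable lamination is not known to be absolutely continuous in the $C^{1}$ category, and in any case lives only on $\Lambda$, not on the whole basin.

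The place where your sketch is genuinely incomplete is, as you yourself say, step~4. The Egorov/affineness device produces $\Delta(\mu_{y})=0$ for \emph{one} measurably selected $\mu_{y}\in\mathcal M(y)$ along \emph{one} common subsequence, on a subset of the plaque of measure $1-\epsilon$. Promoting this to \emph{every} $\mu\in\mathcal M(y)$ for Lebesgue-a.e.\ $y$ is not a routine covering: a priori the bad accumulation point at $y$ may arise along a private sequence of times invisible to any fixed subsequence, so one needs a second measurable selection (of the bad limits and of the times realizing them) and a further Egorov step before the contradiction can close---and one must check that the integral decomposition $\nu_{D}=\int\mu_{y}\,d(\cdot)$ survives the passage to subsets and subsequences with the affineness of $h_{\cdot}(\phi_{1},\mathcal F^{u})$ intact. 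This can be carried out, but it is precisely the technical core of \cite[Theorem~C]{CDJ20}, and your one-line gesture toward ``a covering/contradiction argument ranging over all of $\mathcal M(y)$'' does not yet contain it.
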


\begin{lemma}\label{lemma 3.3}\cite[Theorem F]{CDJ20}
Assume that $\phi$ is a flow generated by a $C^{1}$ vector field $X$. If $\Lambda$ is an attractor of $\phi$ exhibiting dominated splitting $T_{\Lambda}M=E\oplus_{\prec} F$, then for Lebesgue almost every point $x$ of $B(\Lambda)$, and for any $\mu\in \mathcal{M}(x)$, we have 	
\begin{equation}\label{fe}
h_{\mu}(\phi_{1})\ge \int \log|\det D\phi_{1}|_{F}|d\mu.
\end{equation}
\end{lemma}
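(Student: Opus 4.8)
The plan is to prove the inequality by identifying $\int \log|\det D\phi_{1}|_{F}|\,d\mu$ with the exponential growth rate of Riemannian volume of small disks tangent to the dominating bundle $F$, and then bounding the metric entropy of the limit measure $\mu$ from below by this volume growth. Throughout I work with the time-one map $f=\phi_{1}$, for which, as noted after the Oseledets statement, the Lyapunov data and the entropy agree with those of the flow. First I would use the domination $E\oplus_{\prec}F$ to produce, via the Hirsch--Pugh--Shub plaque family theorem applied to $F$, a continuous family of $C^{1}$ embedded disks $\{W(y)\}_{y\in\Lambda}$ with $T_{y}W(y)=F_{y}$, of uniform radius, locally forward invariant for $f$. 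Because $F$ dominates $E$, the forward iterates of any disk tangent to a thin cone around $F$ remain tangent to such a cone and their tangent planes converge exponentially to $F$; this lets me replace $|\det Df^{n}|_{T_{y}D}|$ by the Birkhoff product $\prod_{k=0}^{n-1}|\det Df|_{F}|(f^{k}(y))$ up to a subexponential error. I would then fix a small flow box inside the trapping region $U$ foliated by such disks and disintegrate Lebesgue measure as a Fubini integral of the induced volumes $m_{D}$ against a transverse measure.

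For Lebesgue-almost every $x\in B(\Lambda)$ I may assume, by Fubini over the foliation, that $x$ lies on a disk $D$ on which almost every point has the time-averaging property, and that $\mu\in\mathcal M(x)$ is realized along some $T_{n}\to\infty$. Birkhoff's theorem, together with a standard comparison between the continuous-time average $\tfrac1{T_{n}}\int_{0}^{T_{n}}\delta_{\phi_{t}(x)}dt$ and the discrete sum $\tfrac1n\sum_{k=0}^{n-1}\delta_{f^{k}(x)}$, then yields for a small subdisk $D_{x}\ni x$
$$
\frac1n\log\mathrm{vol}\big(f^{n}(D_{x})\big)\;\longrightarrow\;\int\log|\det D\phi_{1}|_{F}|\,d\mu ,
$$
the left-hand side being controlled by the tangency estimate from the previous paragraph.

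The main step, and the one I expect to be the principal obstacle, is to convert this volume growth into the entropy lower bound $h_{\mu}(f)\ge\int\log|\det D\phi_{1}|_{F}|\,d\mu$. The mechanism is a Katok/Brin--Katok-type count: covering $f^{n}(D_{x})$ by elements of a refined Bowen partition of mesh $\epsilon$, the number of elements needed is at least $\mathrm{vol}(f^{n}(D_{x}))$ divided by the maximal volume of a single image element, so their preimages form an $(n,\epsilon)$-separated set in $D_{x}$ of cardinality growing like $e^{n\int\log|\det D\phi_{1}|_{F}|d\mu}$. Turning this separated-orbit count into a lower bound for the entropy of the \emph{specific} measure $\mu$ is delicate for two reasons. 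First, one must localize: the empirical measures of the counted orbit segments have to be shown to accumulate on $\mu$ rather than on some other invariant measure, which forces one to restrict the count to orbits whose empirical distributions lie in a weak-$*$ neighborhood of $\mu$ and to invoke a relative variational principle there. Second, only $C^{1}$ regularity is available, so there is no H\"older bounded-distortion control of $Df^{n}|_{F}$ along $D_{x}$; this must be circumvented by shrinking both the disk and $\epsilon$ and relying on the uniform continuity of $F$ and the exponential tangency convergence coming from domination, in place of any distortion estimate. Once these points are handled the inequality follows, and I would emphasize that it is genuinely the reverse direction of Ruelle's inequality (Lemma \ref{Ruelle inequality}) along the dominated bundle.
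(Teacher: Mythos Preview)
The paper does not prove this lemma; it is quoted as Theorem~F from \cite{CDJ20} and used as a black box throughout \S3. There is therefore no proof in the present paper to compare your proposal against.

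That said, your outline follows a plausible route---volume growth of disks tangent to the dominating bundle $F$ yields an entropy lower bound for empirical limits---and you have correctly isolated the two genuine difficulties: localizing the separated-set count to orbits whose empirical measures approximate $\mu$ (so that the variational principle bounds $h_{\mu}(f)$ rather than $h_{\rm top}$), and the absence of bounded distortion in $C^{1}$. The weakest point in your sketch is the claim that shrinking $\epsilon$ and the disk suffices to replace distortion control: at time $n$ the ratio of $|\det Df^{n}|_{F}|$ across a Bowen ball need not be close to $1$ in $C^{1}$, however small the initial scale, because nearby orbits may separate and visit regions where $Df|_{F}$ differs. A rigorous $C^{1}$ argument of this type typically requires either a Pliss/hyperbolic-time selection (so that on a positive-density set of times the disk behaves as if uniformly expanding with controlled geometry up to that time) or a Yomdin--Burguet reparametrization lemma to cover $f^{n}(D_{x})$ by nearly-isometric charts. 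If you intend to supply a self-contained proof rather than cite \cite{CDJ20}, one of these mechanisms must be made explicit; the phrase ``relying on the uniform continuity of $F$'' does not by itself close the gap.
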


\medskip
Given an attractor $\Lambda\subset M$ for the flow $\phi$, use $\mathcal{M} (\phi,\Lambda)$ denotes the set of $\phi$-invariant probability measures supported on $\Lambda$.

\begin{lemma}\label{lemma 3.4}
\cite[Corollary 2.15]{CDJ20} Let $\phi$ be the flow generated by a $C^{1}$ vector field $X$, and let $\Lambda$ be an attractor of $\phi$ with partially hyperbolic splitting
$$
T_{\Lambda}M=E^{cs}\oplus E^{uu}.
$$ 
Then the set 
$$
\mathcal{M}_{u}=\Big\{{\mu \in \mathcal{M} (\phi,\Lambda)}:h_{\mu}(\phi_{1},\mathcal{F}^{u})=\int \log|\det D \phi_{1}|_{E^{uu}}|d\mu\Big\}
$$ 
is convex and compact. A measure belongs to $\mathcal{M}_{u}$ iff each of its ergodic components does.
\end{lemma}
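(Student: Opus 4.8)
The plan is to introduce the entropy defect
$$
\Psi(\mu) \;:=\; \int \log|\det D\phi_{1}|_{E^{uu}}|\,d\mu \;-\; h_{\mu}(\phi_{1},\mathcal{F}^{u}),\qquad \mu\in\mathcal{M}(\phi,\Lambda),
$$
and to show that $\Psi$ is a nonnegative function which is affine, lower semicontinuous, and ``linear under the ergodic decomposition.'' Granting this, $\mathcal{M}_{u}=\Psi^{-1}(0)$ and the three assertions follow from soft arguments. Nonnegativity of $\Psi$ is the Margulis--Ruelle inequality along the unstable lamination, $h_{\mu}(\phi_{1},\mathcal{F}^{u})\le\int\log|\det D\phi_{1}|_{E^{uu}}|\,d\mu$ for every invariant $\mu$, which I would prove using a measurable partition subordinate to $\mathcal{F}^{u}$ exactly as in Ledrappier--Young theory. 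Note it is \emph{not} a consequence of Lemma~\ref{Ruelle inequality}, which involves all positive exponents, possibly including exponents along $E^{c}$.

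For convexity and the ergodic-component statement I would first record two structural facts about $\mu\mapsto h_{\mu}(\phi_{1},\mathcal{F}^{u})$: it is affine on $\mathcal{M}(\phi,\Lambda)$, and it satisfies $h_{\mu}(\phi_{1},\mathcal{F}^{u})=\int_{\Sigma}h_{\eta_{x}}(\phi_{1},\mathcal{F}^{u})\,d\mu$ for the ergodic decomposition $\mu=\int_{\Sigma}\eta_{x}\,d\mu$ of Proposition~\ref{Ergodic decomposition of flow}. These are the continuous-time analogues of classical facts for $C^{1}$ diffeomorphisms: fix a measurable partition subordinate to $\mathcal{F}^{u}$ which simultaneously computes the leaf-wise entropy of all the measures involved, and invoke the affineness and decomposition properties of conditional metric entropy (citing the relevant statements of \cite{CDJ20} or adapting \cite{LeY85}). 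Since $\mu\mapsto\int\log|\det D\phi_{1}|_{E^{uu}}|\,d\mu$ is trivially affine and linear under ergodic decomposition, $\Psi$ inherits both properties. Hence, for a segment $\mu_{t}=t\mu_{1}+(1-t)\mu_{0}$ with $\mu_{0},\mu_{1}\in\mathcal{M}_{u}$ we get $\Psi(\mu_{t})=t\Psi(\mu_{1})+(1-t)\Psi(\mu_{0})=0$, so $\mathcal{M}_{u}$ is convex; and from $\Psi(\mu)=\int_{\Sigma}\Psi(\eta_{x})\,d\mu$ together with $\Psi\ge 0$ we read off that $\Psi(\mu)=0$ if and only if $\Psi(\eta_{x})=0$ for $\mu$-a.e.\ $x$, i.e.\ $\mu\in\mathcal{M}_{u}$ if and only if $\mu$-a.e.\ ergodic component of $\mu$ lies in $\mathcal{M}_{u}$.

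For compactness, $\mathcal{M}(\phi,\Lambda)$ is weak-$*$ compact, being a closed subset of the probability measures on the compact set $\Lambda$ cut out by the (closed) invariance condition. The map $\mu\mapsto\int\log|\det D\phi_{1}|_{E^{uu}}|\,d\mu$ is weak-$*$ continuous because $\log|\det D\phi_{1}|_{E^{uu}}|$ is a continuous function on $\Lambda$ (the bundle $E^{uu}$ varies continuously, $D\phi_{1}$ is continuous, and the restricted determinant is continuous and bounded away from $0$), whereas $\mu\mapsto h_{\mu}(\phi_{1},\mathcal{F}^{u})$ is upper semicontinuous on $\mathcal{M}(\phi,\Lambda)$ — the flow version of the upper semicontinuity of unstable entropy for partially hyperbolic systems, again quoted from \cite{CDJ20}. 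Therefore $\Psi$ is lower semicontinuous, so $\mathcal{M}_{u}=\{\Psi\le 0\}$ is closed in the compact set $\mathcal{M}(\phi,\Lambda)$, hence compact.

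The genuine content is not the convex-analytic bookkeeping above but the three analytic inputs on the leaf-wise entropy $h_{\mu}(\phi_{1},\mathcal{F}^{u})$: the Ruelle-type upper bound, affineness together with compatibility with the flow ergodic decomposition, and upper semicontinuity in $\mu$. Each adapts a fact known for $C^{1}$ diffeomorphisms via Ledrappier--Young partitions subordinate to an unstable lamination; the only flow-specific issues are (a) choosing one measurable partition subordinate to $\mathcal{F}^{u}$ that serves all measures in an ergodic decomposition simultaneously, and (b) transferring statements from $\phi_{1}$ to the whole flow, which is immediate since every quantity here is defined through the time-one map. I would thus either cite the corresponding propositions of \cite{CDJ20} directly or reprove them along these lines.
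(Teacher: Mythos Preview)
The paper does not supply its own proof of this lemma: it is quoted verbatim as \cite[Corollary 2.15]{CDJ20} and used as a black box. So there is no in-paper argument to compare against; the relevant comparison is with the proof in \cite{CDJ20}, and your outline is precisely that argument. One defines the defect $\Psi$, uses the leafwise Ruelle inequality to get $\Psi\ge 0$, affineness and compatibility with the ergodic decomposition of the unstable entropy to handle convexity and the ergodic-component characterization, and upper semicontinuity of $\mu\mapsto h_\mu(\phi_1,\mathcal{F}^u)$ (together with continuity of the Jacobian integral) to obtain closedness. Your identification of the three analytic inputs and of the fact that everything passes through the time-one map is accurate; in \cite{CDJ20} these inputs are established in the $C^1$ setting and then the soft argument you describe is carried out. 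In short, your proposal is correct and coincides with the approach of the cited reference that the present paper simply invokes.
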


\begin{lemma}\label{System is not a singularity}
Let $\phi$ be the flow generated by a $C^{1}$ vector field $X$. If $\Lambda$ is an attractor for $\phi$ with partially hyperbolic splitting $T_{\Lambda}M=E^{cs}\oplus_{\prec} E^{uu}$, then there is no singularity on $\Lambda$.
\end{lemma}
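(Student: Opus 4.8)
The plan is to argue by contradiction: suppose $\sigma \in \Lambda$ is a singularity, i.e. $X(\sigma) = 0$. Since $\Lambda$ carries a partially hyperbolic splitting $T_\Lambda M = E^{cs}\oplus_\prec E^{uu}$ with $E^{uu}$ uniformly expanding of dimension $\dim E^{uu} = d_u \ge 1$, the strong unstable manifold $W^{uu}(\sigma)$ is an immersed submanifold of dimension $d_u$ through $\sigma$ that is forward-contracted by $\phi_{-t}$. First I would recall that $\Lambda$ is an attractor, so $\Lambda = \bigcap_{t\ge 0}\phi_t(U)$ for a neighborhood $U$ with $\phi_t(\overline U)\subset U$ for $t>0$; a standard consequence (using that $W^{uu}(\sigma)$ is tangent to the expanding bundle and that unstable manifolds of points in an attractor lie in the attractor) is that $W^{uu}(\sigma)\subset \Lambda$.

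The key step is to derive a contradiction from the coexistence of the fixed point $\sigma$ and the expanding direction at $\sigma$. Since $X(\sigma)=0$, the point $\sigma$ is a fixed point of the flow, so $D\phi_t(\sigma)$ is a genuine linear cocycle over a fixed point and the Dirac mass $\delta_\sigma$ is $\phi$-invariant; its Lyapunov exponents are just the real parts of the eigenvalues of $DX(\sigma)$. The bundle $E^{uu}_\sigma$ corresponds to the eigenvalues with strictly positive real part (its expansion being uniform, hence surviving at the fixed point by continuity of the splitting), and $\dim E^{uu}_\sigma = d_u \ge 1$, so $W^{uu}(\sigma)$ is a nontrivial manifold. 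Now I would invoke Lemma \ref{lemme 3.2} together with the entropy-finiteness machinery: more directly, the cleaner route is to contradict the existence of a Gibbs $u$-state's entropy formula, but since this lemma sits before those tools are deployed at full strength, I expect the intended argument is purely topological/dynamical — namely, pick a point $y \in W^{uu}(\sigma)\setminus\{\sigma\}$ close to $\sigma$; then $\phi_{-t}(y)\to \sigma$ as $t\to+\infty$, so $y$ lies in the stable set of $\sigma$ under $\phi_{-t}$, while simultaneously $y\in\Lambda$ and the forward orbit of $y$ stays in $\Lambda$.

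The real obstacle is to rule out this configuration. The argument I would push is: consider the local flow box around $\sigma$. Partial hyperbolicity forces $\langle X\rangle \subset E^{cs}$ away from singularities, but at $\sigma$ the vector field vanishes and gives no constraint; the contradiction must come instead from the fact that $E^{uu}$ dominates $E^{cs}$ uniformly on all of $\Lambda$, in particular the domination constants $C,\lambda$ are uniform and hold at $\sigma$. Since $\sigma$ is fixed, $\|D\phi_t|_{E^{cs}_\sigma}\|\cdot\|D\phi_{-t}|_{E^{uu}_\sigma}\| \le Ce^{-\lambda t}$ for all $t>0$; this is perfectly consistent, so domination alone is not the obstruction. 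The genuine point, which is the crux of the proof, is to use the \emph{attractor} hypothesis to produce a recurrence that a hyperbolic-type fixed point cannot sustain: precisely, take the segment of $W^{uu}(\sigma)$ of some fixed inner radius $r$, push it forward by $\phi_t$; it grows and accumulates on a set inside $\Lambda$, while the local picture near $\sigma$ shows points on $W^{uu}(\sigma)$ escape any small neighborhood of $\sigma$ in bounded time, forcing $\overline{W^{uu}(\sigma)}$ to be a nontrivial flow-invariant compact set on which one can build an invariant measure $\nu\ne\delta_\sigma$ supported off $\sigma$. Then I would apply Lemma \ref{lemme 3.2} and Lemma \ref{lemma 3.4} to get a Gibbs $u$-state and Ruelle's inequality (Lemma \ref{Ruelle inequality}) to reach a contradiction with the dimension count, or — more likely the author's route — simply cite that an attractor with a uniformly expanding subbundle cannot contain a singularity because the singularity would have to be Lyapunov stable for $\phi_{-t}$ restricted to its own unstable manifold, contradicting that the manifold is nontrivial. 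I expect the hardest and most delicate step is exactly this last one: cleanly converting "$\sigma$ is fixed and has a nontrivial strong unstable manifold inside the attractor" into a contradiction, which presumably uses the attractor's trapping region to show $W^{uu}(\sigma)$ cannot be contained in $\Lambda$ while also being forced to be — I would look for the precise mechanism in the flow-box coordinates and the uniform expansion estimate.
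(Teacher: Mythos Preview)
Your setup is correct --- assume a singularity $\sigma\in\Lambda$, observe that $W^{uu}(\sigma)\subset\Lambda$ since $\Lambda$ is an attractor, and take $x\in W^{uu}(\sigma)\setminus\{\sigma\}$. But from there you lose the thread: none of the mechanisms you propose (Gibbs $u$-states, entropy, building auxiliary invariant measures, Lyapunov stability, flow-box coordinates) is what closes the argument, and you acknowledge as much in your final sentence.

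The missing idea is short and completely elementary. The strong unstable manifold $W^{uu}(\sigma)$ is $\phi$-invariant (since $\sigma$ is a fixed point), so the entire orbit $\{\phi_t(x):t\in\RR\}$ lies inside $W^{uu}(\sigma)$. Consequently the velocity vector $X(x)$ is tangent to $W^{uu}(\sigma)$ at $x$, i.e.\ $X(x)\in T_xW^{uu}(\sigma)=E^{uu}_x$. Now use the flow relation $X(\phi_t(x))=D\phi_t\,X(x)$: since $X(x)\in E^{uu}_x$ and $E^{uu}$ is uniformly expanding, $\|X(\phi_t(x))\|\ge C^{-1}e^{\lambda t}\|X(x)\|\to\infty$. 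This contradicts the boundedness of $\|X\|$ on the compact manifold $M$. That is the whole proof.

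The crux you were searching for is not a recurrence or measure-theoretic obstruction; it is the observation that flow-invariance of $W^{uu}(\sigma)$ forces $X$ itself to lie in the expanding bundle along that manifold, which is impossible for a bounded vector field.
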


\begin{proof}
Going by contradiction, we assume that there is a singularity $\sigma$ on $\Lambda$, then $\sigma$ has a splitting 
$$
T_{\sigma}M=E^{cs}\oplus E^{uu}.
$$ 
Therefore, the strong unstable manifold $W^{uu}(\sigma)$ of $\sigma$ is tangent to $E^{uu}(\sigma)$. Since $\Lambda$ is an attractor, $W^{uu}(\sigma)\subset \Lambda$. Take a point $x\in W^{uu}(\sigma)\setminus\{\sigma\}$, then the invariance of $W^{uu}(\sigma)$ gives
$$
\phi_{t}(x)\in W^{uu}(\sigma)\setminus\{\sigma\},\ \ \forall t\in \mathbb{R}.
$$ 
This shows that the orbit of $x$ is in $W^{uu}(\sigma)$ (see Figure 1).
\begin{figure}[h]
	\centering
	\includegraphics[width=0.40\textwidth]{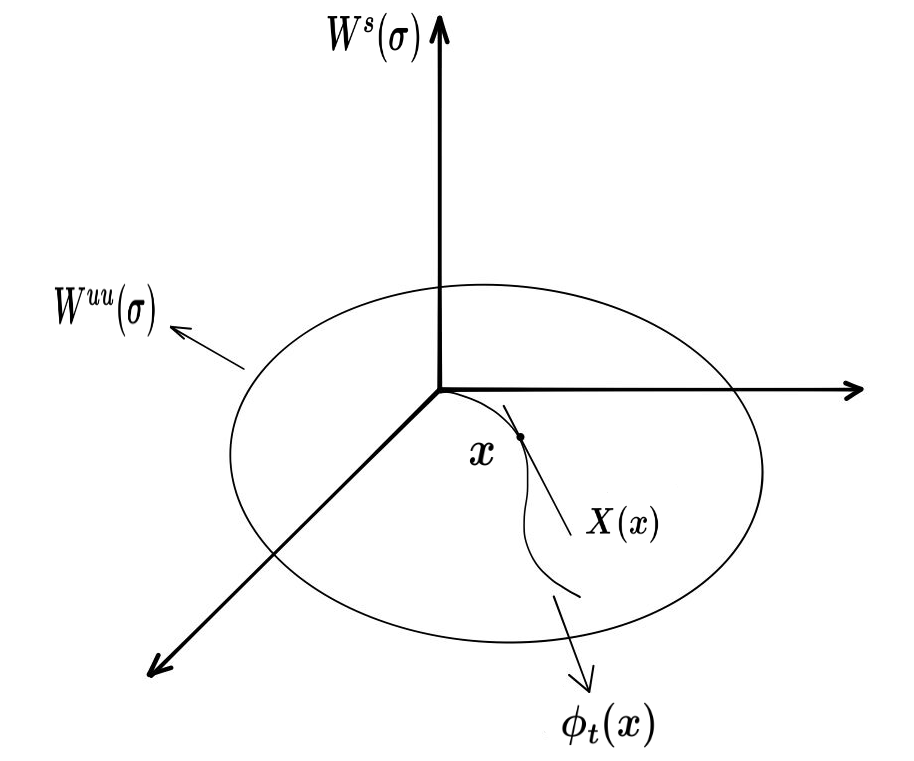}
	\caption{strong unstable manifold of singularity}
\end{figure}
In particular, 
$$
X(x)\in T_{x}W^{uu}(\sigma)= E^{uu}(x).
$$ 
Since $X$ is $C^{1}$, there exists a constant $C>0$ such that for any $x\in M$, we have $\| X(x) \| \le C$. Then \begin{equation}\label{3.4}
 \frac{\| X(\phi_{t}(x))\|}{\| X(x) \|}\le \frac{C}{\| X(x) \|}< \infty.
 \end{equation}
Since $X(\phi_{t}(x))=D\phi_{t}X(x)$ and $X(x)\in E^{uu}(x)$, there exists $\lambda>0$ such that for any $t>0$, we have 
$$
\| X(\phi_{t}(x))\| \ge {\rm e}^{\lambda t}\| X(x)\|.
$$ 
This implies that $\frac{\| X(\phi_{t}(x))\|}{\| X(x) \|}$ is unbounded as $t\to \infty$, whis is a contradiction to (\ref{3.4}). Therefore, there is no singularity on $\Lambda$.
\end{proof}

Lemma \ref{System is not a singularity} together with a recent work \cite[Theorem D]{MaS20} implies the following interesting result.

\begin{corollary}\label{TheC}
Let $X$ be a $C^{1}$ vector field on a compact manifold $M$ and let $\Lambda$ be an attractor with a partially hyperbolic splitting $T_{\Lambda}M=E^{ss}\oplus E^{c}\oplus E^{uu}$. Assume $\dim E^{c}=2$, then $\phi_{1}$ is entropy expensive\footnote{$\phi_1$ is entropy expansive if there exists $\varepsilon>0$ such that $\sup_{x\in M}{h}_{{\rm top}}(\phi_1,\ B_{\infty}(x,\varepsilon))=0$, where $B_{\infty}(x,\varepsilon)=\bigcap^{\infty}_{n=0}\phi_{-n}(B(\phi_n(x),\varepsilon)).$}. 
\end{corollary}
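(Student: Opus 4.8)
The plan is to deduce the statement from the recent entropy-expansiveness criterion \cite[Theorem D]{MaS20}, whose hypotheses are supplied by the partial hyperbolicity of $\Lambda$ together with the low dimension of $E^{c}$. The only substantial point to verify is that $\Lambda$ carries no singularity: since $E^{uu}$ dominates $E^{cs}=E^{c}\oplus E^{ss}$, the attractor satisfies $T_{\Lambda}M=E^{cs}\oplus_{\prec}E^{uu}$, so Lemma \ref{System is not a singularity} applies and gives ${\rm Sing}(X)\cap\Lambda=\emptyset$. In particular $X$ does not vanish on $\Lambda$, the line field $\langle X\rangle$ is continuous and $D\phi_{t}$-invariant, and, by the Remark following Definition \ref{partially hyperbolic of flow}, $\langle X\rangle\subset E^{c}$.

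Next I would explain why $\dim E^{c}=2$ is exactly the bound one needs. The time-one map $\phi_{1}|_{\Lambda}$ inherits the dominated splitting $E^{ss}\oplus E^{c}\oplus E^{uu}$, in which the only non-hyperbolic summand is the two-dimensional bundle $E^{c}$, and $E^{c}$ contains the neutral direction $\langle X\rangle$. Passing to the normal bundle $\mathcal{N}=T_{\Lambda}M/\langle X\rangle$ and the linear Poincar\'e flow, the induced dominated splitting has center $E^{c}/\langle X\rangle$ of dimension one; so, transverse to the flow, $\Lambda$ looks like the one-dimensional-center situation for which $h$-expansiveness of a diffeomorphism is classical (in the spirit of Liao--Viana--Yang and D\'iaz--Fisher--Pacifico--Vieitez). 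This is precisely the configuration covered by \cite[Theorem D]{MaS20} --- a $C^{1}$ flow, a compact invariant set free of singularities, and a dominated splitting whose central part is at most two-dimensional --- so invoking that theorem yields the desired entropy expansiveness of $\phi_{1}$.

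Since we are free to cite \cite[Theorem D]{MaS20}, the proof itself is short, and the genuine difficulty lives inside that cited result; it is worth pointing to where it lies. The diffeomorphism statements cannot be transplanted verbatim, because $\phi_{1}$ is never expansive along the flow direction and the raw center $E^{c}$ is two-dimensional: one must show that the infinite Bowen balls $B_{\infty}(x,\varepsilon)$ of $\phi_{1}$ carry no topological entropy, controlling the flow direction as a tubular, bounded-distortion factor rather than as a source of entropy, and then running a one-dimensional-center argument on the normal bundle. For the present corollary, however, the whole task collapses to the two elementary checks above: absence of singularities (Lemma \ref{System is not a singularity}) and $\dim E^{c}=2$.
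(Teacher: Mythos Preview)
Your proposal is correct and follows essentially the same route as the paper: both arguments consist of verifying that $\Lambda$ is singularity-free via Lemma \ref{System is not a singularity} and then invoking \cite[Theorem D]{MaS20}. Your additional commentary on the linear Poincar\'e flow and the one-dimensional normal center is helpful intuition but not part of the paper's (very brief) proof.
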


\begin{proof}
Assume that $\phi$ is a flow generated by $X$ with partially hyperbolic splitting $TM=E^{ss}\oplus E^{c}\oplus E^{uu}$, $\dim E^{c}=2$. According to
Rold\'{a}n-Saghin-Yang \cite[Theorem D]{MaS20}, if $\phi$ does not have singularities, then $\phi_{1}$ is entropy expansive. On the other hand, Lemma \ref{System is not a singularity} guarantees that there is no singularity on $M$. Thus we complete the proof of Corollary \ref{TheC}.
\end{proof}

\subsection{Proof of Theorem \ref{TheA}}

Theorem \ref{TheA} is a direct consequence of the following result.
\begin{theorem}
Let $\phi$ be the flow generated by $C^{1}$ vector field $X$ on $M$, assume that $\Lambda$  is an attractor of $\phi$ with a partially hyperbolic splitting 
$$
T_{\Lambda}M=E^{ss}\oplus E^{c}\oplus E^{uu} ,\ \dim E^{c}=2.
$$ 
For Lebesgue almost every point $x$ of $B(\Lambda)$ and for any $\mu\in \M(x)$, $\mu$ has an ergodic component which is an SRB measure. More precisely, we have 
\begin{itemize}
\item $\mu$ is a Gibbs $u$-state;
\item If each ergodic component of $\mu$ has only non-negative center Lyapunov exponents, then $\mu$ itself is an SRB measure;
\item If some ergodic component of $\mu$ has non-positive center Lyapunov exponents, then this ergodic component is an SRB measure.
\end{itemize}
\end{theorem}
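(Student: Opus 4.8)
The plan is to combine the two entropy formulas from Lemma~\ref{lemme 3.2} and Lemma~\ref{lemma 3.3} with the Ruelle inequality and the entropy-expansiveness furnished by Corollary~\ref{TheC}, and then do a careful bookkeeping of Lyapunov exponents split according to the partially hyperbolic decomposition. Fix a point $x$ in the full-measure subset of $B(\Lambda)$ where both Lemma~\ref{lemme 3.2} and Lemma~\ref{lemma 3.3} (applied with $E\oplus_\prec F = (E^{ss}\oplus E^c)\oplus_\prec E^{uu}$) hold, and fix $\mu\in\mathcal M(x)$. From Lemma~\ref{lemme 3.2} we immediately get that $\mu$ is a Gibbs $u$-state, which is the first bullet. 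The point of Lemma~\ref{lemma 3.3} is that $h_\mu(\phi_1)\ge \int\log|\det D\phi_1|_{E^{uu}}|\,d\mu$, and the point of being a Gibbs $u$-state combined with the Ledrappier--Young-type theory (or directly Lemma~\ref{lemma 3.4}) is that the full entropy is controlled from above. The strategy is to push $\mu$ through the ergodic decomposition $\mu=\int_\Sigma\eta_y\,d\mu$ of Proposition~\ref{Ergodic decomposition of flow}, so it suffices to understand each ergodic component $\eta_y$.

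**Next I would** analyze a single ergodic component $\nu$ (one of the $\eta_y$). Since $\mu$ is a Gibbs $u$-state, by Lemma~\ref{lemma 3.4} so is $\nu$, hence $h_\nu(\phi_1)\ge h_\nu(\phi_1,\mathcal F^u)=\int\log|\det D\phi_1|_{E^{uu}}|\,d\nu=\sum\lambda^{uu}(\nu)$, the sum of exponents along $E^{uu}$, all of which are positive. By the Ruelle inequality (Lemma~\ref{Ruelle inequality}) we have $h_\nu(\phi_1)\le\sum\lambda^+(\nu)$. Now split the exponents: the exponents along $E^{ss}$ are all negative and contribute nothing to $\sum\lambda^+$; the exponents along $E^{uu}$ are all positive; and there are at most two center exponents $\lambda^c_1\le\lambda^c_2$ coming from the two-dimensional bundle $E^c$ (one of which, in the non-singular case guaranteed by Lemma~\ref{System is not a singularity}, is the zero flow-direction exponent). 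So $\sum\lambda^+(\nu)=\sum\lambda^{uu}(\nu)+ (\lambda^c_1)^+ + (\lambda^c_2)^+$. If every center exponent of $\nu$ is non-negative, then $(\lambda^c_1)^+ + (\lambda^c_2)^+ = \lambda^c_1 + \lambda^c_2 = \int\log|\det D\phi_1|_{E^c}|\,d\nu$, and I would then need to upgrade the bound $h_\nu(\phi_1)\ge\sum\lambda^{uu}(\nu)$ to $h_\nu(\phi_1)\ge\sum\lambda^{uu}(\nu)+\lambda^c_1+\lambda^c_2$; combined with Ruelle this forces equality and $\nu$ is an SRB measure. If instead some center exponent of $\nu$ is non-positive, then $\lambda^c_1\le 0$, so $(\lambda^c_1)^+=0$ and (using that the flow exponent is $0$ and $\dim E^c=2$) the other center exponent also satisfies $(\lambda^c_2)^+=0$ — so $\sum\lambda^+(\nu)=\sum\lambda^{uu}(\nu)$, which already equals the lower bound on $h_\nu(\phi_1)$, again forcing equality: $\nu$ is SRB.

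**The main obstacle** is the step of bootstrapping the Gibbs $u$-state entropy lower bound $h_\nu\ge\sum\lambda^{uu}$ up to $h_\nu\ge\sum\lambda^{uu}+\lambda^c_1+\lambda^c_2$ in the case where the center exponents are non-negative; this is where the two-dimensionality of $E^c$ and the borrowed ideas from \cite{CDJ20} on dominated entropy formulas enter. The natural route is: by domination $E^{ss}\oplus E^c \oplus_\prec E^{uu}$ and the dominated entropy inequality (Lemma~\ref{lemma 3.3} applied with $F$ taken successively to be $E^{uu}$, then $E^c\oplus E^{uu}$), one gets $h_\mu(\phi_1)\ge\int\log|\det D\phi_1|_{E^c\oplus E^{uu}}|\,d\mu$; this survives passage to ergodic components by the affinity of entropy and of the integrand, and when the center exponents of $\nu$ are non-negative the right-hand side is exactly $\sum\lambda^{uu}(\nu)+\lambda^c_1+\lambda^c_2=\sum\lambda^+(\nu)$, which together with Ruelle closes the argument. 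One must be careful that Lemma~\ref{lemma 3.3} is stated for $\mu\in\mathcal M(x)$ for a.e.\ $x$, not directly for ergodic components, so the affinity/ergodic-decomposition argument (Proposition~\ref{Ergodic decomposition of flow} plus concavity-and-convexity of the two sides, exactly as in Lemma~\ref{lemma 3.4}) is the technical heart; entropy expansiveness from Corollary~\ref{TheC} is what guarantees upper semicontinuity of $\mu\mapsto h_\mu(\phi_1)$ and hence that these limit measures along a generic orbit inherit the formula. Finally I would assemble the three bullets: the Gibbs $u$-state property is Lemma~\ref{lemme 3.2}; for the second bullet, apply the above to every ergodic component and integrate (the SRB defining equality is affine in $\mu$, so $\mu$ itself is SRB); for the third bullet, just exhibit the one bad ergodic component, which the dichotomy above shows is SRB.
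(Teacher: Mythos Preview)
Your overall architecture matches the paper's: apply Lemma~\ref{lemme 3.2} to get that $\mu$ is a Gibbs $u$-state, apply Lemma~\ref{lemma 3.3} with $F=E^c\oplus E^{uu}$ to get $h_\mu(\phi_1)\ge\int\log|\det D\phi_1|_{E^c\oplus E^{uu}}|\,d\mu$, and then split into the two center-exponent cases. For the third bullet your treatment is essentially identical to the paper's: the flow direction contributes a zero exponent (using Lemma~\ref{System is not a singularity}), so if the other center exponent of an ergodic component $\nu$ is $\le 0$ then $\sum\lambda^+(\nu)=\int\log|\det D\phi_1|_{E^{uu}}|\,d\nu$, and the Gibbs $u$-state property of $\nu$ (via Lemma~\ref{lemma 3.4}) squeezes $h_\nu$ between $h_\nu(\phi_1,\mathcal F^u)$ and $\sum\lambda^+(\nu)$.

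There is, however, a real gap in your handling of the second bullet. You write that the inequality $h_\mu(\phi_1)\ge\int\log|\det D\phi_1|_{E^c\oplus E^{uu}}|\,d\mu$ ``survives passage to ergodic components by the affinity of entropy and of the integrand''. It does not: affinity of both sides gives only $\int\big(h_{\eta_y}-\int\log|\det D\phi_1|_{E^c\oplus E^{uu}}|\,d\eta_y\big)\,d\mu(y)\ge 0$, which is an average statement and says nothing pointwise without an a~priori inequality in the \emph{same} direction for each $\eta_y$. (The mechanism behind Lemma~\ref{lemma 3.4} works because the reverse inequality $h_\nu(\phi_1,\mathcal F^u)\le\int\log|\det D\phi_1|_{E^{uu}}|\,d\nu$ always holds; there is no analogous a~priori bound here.) The paper sidesteps this entirely: since the statement only asks that $\mu$ itself be SRB, one argues directly at the level of $\mu$. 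Under the hypothesis that every ergodic component has non-negative center exponents, one has $\int\sum\lambda^+\,d\mu=\int\log|\det D\phi_1|_{E^c\oplus E^{uu}}|\,d\mu$, and then Lemma~\ref{lemma 3.3} combined with Ruelle's inequality (Lemma~\ref{Ruelle inequality}) forces $h_\mu(\phi_1)=\int\sum\lambda^+\,d\mu$. No component-by-component bootstrap is needed, and in particular entropy expansiveness (Corollary~\ref{TheC}) plays no role in this proof --- the paper does not invoke it here.
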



\begin{proof}
Let $\Omega\subset B(\Lambda)$ be the full Lebesgue measure set given by Lemma \ref{lemme 3.2} and Lemma \ref{lemma 3.3}. Then for every $x\in \Omega$, each $\mu\in \mathcal{M}(x)$ satisfies the following entropy formulas
\begin{equation}\label{es0}
h_{\mu}(\phi_1,\mathcal{F}^u)=\int \log |\det D\phi_1|_{E^{uu}}|d\mu
\end{equation}
\begin{equation}\label{es1}
h_{\mu}(\phi_1)\ge \int \log|\det D\phi_{1}|_{E^{c}\oplus E^{uu}}|d\mu.
\end{equation}
If any ergodic component of $\mu$ has only non-negative center Lyapunov exponents, then 
$$
\int  \sum \lambda^{+}(z)d\mu(z)=\int \log|\det D\phi_{1}|_{E^{c}\oplus E^{uu}}|d\mu.
$$ 
Combining this with (\ref{es1}) one gets
$$
h_{\mu}(\phi_{1})\geqslant\int \log|\det D\phi_{1}|_{E^{c}\oplus E^{uu}}|d\mu=\int  \sum \lambda^{+}(z)d\mu(z).
$$ 
By Lemma \ref{Ruelle inequality}, we know
$$
h_{\mu}(\phi)=h_{\mu}(\phi_{1})\le \int \sum \lambda^{+}(z)d\mu(z).
$$
Consequently, we obtian
$$
h_{\mu}(\phi)=\int \sum \lambda^{+}(z)d\mu(z).
$$
%
Hence, $\mu$ is an SRB measure.

\medskip	
If there exists an ergodic component $\nu$ of $\mu$ which 
has non-positive center Lyapunov exponents, then by (\ref{es0}) and Lemma \ref{lemma 3.4} we know 
\begin{equation}\label{es3}
h_{\nu}(\phi_{1},\mathcal{F}^{u})=\int \log |\det D\phi_{1}|_{E^{uu}}|d\nu>0.
\end{equation}

By Poincar\'{e} recurrent theorem, $\mu$-almost every point $x$ is recurrent, i.e., there exist $t_{n}\to \infty$ such that $\phi_{t_{n}}(x)\to x$. Note that $\Lambda$ does not contain any singularities by Lemma \ref{System is not a singularity}, we have $X(x)\neq 0$. So, we get $\lim_{n\to +\infty}X(\phi_{t_{n}}(w))=X(x)\neq 0$. Observe also that $D\phi_{t_{n}}(X(x))= X(\phi_{t_{n}}(x))$. Consequently, 
\begin{eqnarray*}
\lim_{t\to +\infty}\frac{1}{t}\log \|D\phi_t(X(x))\|&=&\lim_{n\to +\infty}\frac{1}{t_n}\log \|D\phi_{t_n}(X(x))\|\\
&=&\lim_{n\to +\infty}\frac{1}{t_n}\log \|X(\phi_{t_{n}}(x))\|\\
&=&\lim_{n\to +\infty}\frac{1}{t_n}\log ||X(x)\|=0.
\end{eqnarray*}
This shows that the Lyapunov exponent of $\nu$ on $\textless X \textgreater$ is zero. Since ${\rm dim}E^c=2$, $\nu$ has at most two different Lyapunov exponents along $E^c$, thus $\nu$ has only non-positive center Lyapunov exponents.
Therefore, 
$$
\int \sum \lambda^{+}(z)d\nu(z)=\int \log|\det D\phi_{1}|_{E^{uu}}|d\nu.
$$ 
This together with Lemma \ref{Ruelle inequality} and (\ref{es3}) ensures that
$$
h_{\nu}(\phi_{1})\le \int \sum \lambda^{+}(z)d\nu(z)=\int \log|\det D\phi_{1}|_{E^{uu}}|d\nu= h_{\nu}(\phi_{1},\mathcal{F}^u)\le h_{\nu}(\phi_1).
$$
Thus, we obtain
$$
h_{\nu}(\phi_{1})= \int \sum \lambda^{+}(z)d\nu(z),
$$
which shows that $\nu$ is an SRB measure.

\end{proof}

\section{The existence and finiteness of physical measures}
In this section, we will give the proof of Theorem \ref{TheB} and Corollary \ref{uniqueness}. In $\S$ \ref{e}, we give the existence of physical measures. In $\S$ \ref{f}, we establish the finiteness and uniqueness of physical measures
by using the tool of linear Poincar\'{e} flow.

\subsection{The existence of physical measures}\label{e}
%
%
%

According to Ruelle inequality and ergodic decomposition theorem, we get the following result. 
%

\begin{lemma}\label{lemma SRB}\cite[Lemma 3.3]{HCMP}
If $\mu$ is an SRB measure, then for $\mu$-almost every $x$, $\eta_x$ is an SRB measure. 
\end{lemma}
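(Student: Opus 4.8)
The plan is to combine the ergodic decomposition of $\mu$ (Proposition \ref{Ergodic decomposition of flow}), the Ruelle inequality (Lemma \ref{Ruelle inequality}), and the affine dependence of metric entropy on the decomposing measure. Since $\mu$ is SRB, we have $h_\mu(\phi)=\int\sum\lambda^+(x)\,d\mu(x)$. Write the ergodic decomposition $\mu=\int_\Sigma \eta_x\,d\mu(x)$ given by Proposition \ref{Ergodic decomposition of flow}; for $\mu$-a.e. $x$ the measure $\eta_x$ is ergodic and $\phi$-invariant, so Lemma \ref{Ruelle inequality} applies to each $\eta_x$, giving
$$
h_{\eta_x}(\phi)\le \int \sum\lambda^+(z)\,d\eta_x(z)
$$
for $\mu$-a.e. $x$.

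Next I would integrate this inequality against $\mu$. The key input is that metric entropy is affine under the ergodic decomposition of a flow, i.e. $h_\mu(\phi)=h_\mu(\phi_1)=\int_\Sigma h_{\eta_x}(\phi_1)\,d\mu(x)=\int_\Sigma h_{\eta_x}(\phi)\,d\mu(x)$ — this is the flow analogue of the classical affinity of $\mu\mapsto h_\mu$ under ergodic decomposition for diffeomorphisms, which transfers to $\phi$ via the time-one map exactly as the other facts in $\S2$ do. Similarly the Lyapunov-exponent data is measurable and $\int_\Sigma\!\big(\int\sum\lambda^+(z)\,d\eta_x(z)\big)d\mu(x)=\int\sum\lambda^+(z)\,d\mu(z)$ by Fubini (the Oseledets splitting and exponents of $\mu$ agree $\mu$-a.e. with those of $\eta_x$, since the Oseledets data is a pointwise-defined invariant object). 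Combining,
$$
\int_\Sigma h_{\eta_x}(\phi)\,d\mu(x)=h_\mu(\phi)=\int \sum\lambda^+(z)\,d\mu(z)=\int_\Sigma\Big(\int \sum\lambda^+(z)\,d\eta_x(z)\Big)d\mu(x).
$$

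Now subtract: the integrand $\int\sum\lambda^+(z)\,d\eta_x(z)-h_{\eta_x}(\phi)$ is nonnegative for $\mu$-a.e. $x$ by the previous step, yet integrates to zero; hence it vanishes $\mu$-a.e. That is exactly the statement that $\eta_x$ is an SRB measure for $\mu$-a.e. $x$.

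The only genuinely delicate point — the "main obstacle" — is justifying the affinity of the entropy and the integral formula for the exponents in the continuous-time setting, together with the requisite measurability (of $x\mapsto h_{\eta_x}(\phi_1)$ and of $x\mapsto\int\sum\lambda^+\,d\eta_x$). For entropy this follows by passing to the time-one diffeomorphism $\phi_1$ and quoting the classical affine ergodic decomposition of entropy (as in \cite{Ma}); one must check that the ergodic decomposition of $\mu$ as a $\phi$-invariant measure coincides (a.e.) with its decomposition as a $\phi_1$-invariant measure, which is ensured by the construction of $\Sigma$ in Proposition \ref{Ergodic decomposition of flow}. For the exponents, measurability and the integral identity are standard consequences of the measurability of the Oseledets splitting. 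Once these bookkeeping issues are settled, the argument is the short comparison above; this is precisely the content of \cite[Lemma 3.3]{HCMP}, and I would simply cite it after indicating the steps.
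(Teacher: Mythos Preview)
Your argument is exactly the one the paper has in mind: the paper does not give a proof but simply writes ``According to Ruelle inequality and ergodic decomposition theorem, we get the following result'' and cites \cite[Lemma~3.3]{HCMP}. Your write-up is a faithful unpacking of that citation.

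One small correction to your bookkeeping: the claim that the $\phi$-ergodic decomposition of $\mu$ coincides with its $\phi_1$-ergodic decomposition is false in general (a $\phi$-ergodic measure need not be $\phi_1$-ergodic; cf.\ the reference \cite{PS} used later in the paper). Fortunately you do not need this. What you actually need is the affinity of $\nu\mapsto h_\nu(\phi_1)$ over \emph{all} $\phi_1$-invariant measures (Jacobs' theorem), applied to the decomposition $\mu=\int_\Sigma \eta_x\,d\mu(x)$, where each $\eta_x$ is $\phi_1$-invariant though perhaps not $\phi_1$-ergodic. With that replacement the rest of your argument stands unchanged.
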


\begin{lemma}\label{nz}
Let $\phi$ be the flow generated by a $C^{1}$ vector field $X$, and let $\Lambda$ be an attractor for $\phi$ with partially hyperbolic splitting 
$$
T_{\Lambda}M=E^{ss}\oplus E^{c}\oplus E^{uu}.
$$ 
If $E^c$ is Gibbs sectional expanding, then for any Gibbs $u$-state $\mu$, all its Lyapunov exponents along $E^c$ are positive except along the flow direction.
\end{lemma}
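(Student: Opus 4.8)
The plan is to combine the Gibbs sectional expanding hypothesis with the vanishing of the Lyapunov exponent along the flow direction, exactly in the spirit of the last part of the proof of Theorem \ref{TheA}. First I would fix a Gibbs $u$-state $\mu$ and, using the ergodic decomposition (Proposition \ref{Ergodic decomposition of flow}) together with Lemma \ref{lemma 3.4}, reduce to the case where $\mu$ is ergodic: indeed each ergodic component of a Gibbs $u$-state is again a Gibbs $u$-state, and the Lyapunov exponents along $E^c$ of $\mu$ are (for a.e.\ point) those of its ergodic components, so it suffices to treat an ergodic Gibbs $u$-state $\nu$. For such a $\nu$ the Lyapunov exponents are constant almost everywhere; denote the ones along $E^c$ by $\chi_1 \le \cdots \le \chi_d$ where $d = \dim E^c$, listed with multiplicity and with the corresponding Oseledets splitting refining $E^c$.

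Next I would record that the Lyapunov exponent along the flow direction $\langle X \rangle$ is zero. By Lemma \ref{System is not a singularity} the attractor $\Lambda$ contains no singularity, so $X$ is nonvanishing and bounded on $\Lambda$; the Poincar\'e recurrence theorem gives, for $\nu$-a.e.\ $x$, times $t_n \to \infty$ with $\phi_{t_n}(x) \to x$, and since $D\phi_{t_n}(X(x)) = X(\phi_{t_n}(x)) \to X(x) \neq 0$, one gets
$$
\lim_{t \to +\infty} \frac{1}{t} \log \|D\phi_t(X(x))\| = 0,
$$
i.e.\ one of the central exponents, say the one carried by $\langle X\rangle$, vanishes. (This is verbatim the flow-direction computation already used above, so I would just cite it.)

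Now I would argue by contradiction: suppose some central exponent is negative, i.e.\ $\chi_1 < 0$, and let $v \in E^c_x$ be a nonzero Oseledets vector realizing $\chi_1$. Since $X(x)$ realizes exponent $0 > \chi_1$, the vectors $v$ and $X(x)$ are linearly independent, so they span a two-dimensional subspace $L_x \subset E^c_x$. The Lyapunov exponent of $D\phi_t$ restricted to $L_x$ in the sense of determinants is the sum of the two Oseledets exponents present in $L_x$ — one of which is $\chi_1 < 0$ and the other is $0$ (or, if $L_x$ is not Oseledets-invariant, one still has that $\frac1t\log|\det D\phi_t|_{L_x}|$ converges to the sum of the two largest... more carefully: $\lim \frac1t \log |\det D\phi_t|_{L_x}| = \chi_1 + 0 = \chi_1 < 0$, since $L_x$ is spanned by an Oseledets vector of exponent $\chi_1$ and by $X(x)$ of exponent $0$, and the determinant growth of a plane spanned by two Oseledets vectors is the sum of their exponents). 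Hence
$$
\lim_{t \to +\infty} \frac{1}{t} \log \left| \det D\phi_t|_{L_x} \right| = \chi_1 < 0,
$$
contradicting the Gibbs sectional expanding hypothesis, which forces this limit to be strictly positive for \emph{every} two-dimensional $L_x \subset E^c_x$ and $\mu$-a.e.\ (hence $\nu$-a.e.) $x$. Therefore $\chi_1 \ge 0$; combined with the fact that the flow direction contributes a zero exponent, all central exponents are nonnegative, and we must further rule out a second zero exponent. For that, suppose $\chi_2 = 0$ as well (with $\chi_1 = 0$ being the flow direction); then taking $L_x$ to be the plane spanned by the two exponent-$0$ Oseledets vectors gives $\lim \frac1t \log|\det D\phi_t|_{L_x}| = 0$, again contradicting Gibbs sectional expanding. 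Hence the only zero central exponent is the one along $\langle X\rangle$ and all the others are strictly positive, which is the claim.

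The main obstacle, and the step deserving the most care, is the passage from ``some central Oseledets exponent is $\le 0$'' to ``there is a two-dimensional $L_x \subset E^c_x$ with $\det$-exponent $\le 0$'': one must be sure that the subspace built from an exponent-$\le 0$ Oseledets direction together with the flow direction genuinely lies in $E^c$ (true, since $\langle X \rangle \subset E^c$ when there are no singularities, by the Remark after Definition \ref{partially hyperbolic of flow}) and is genuinely two-dimensional (true, since the two chosen directions have distinct exponents unless both are zero, a case handled separately as above), and that the determinant-growth rate of such a plane is exactly the sum of the two exponents — which is immediate when $L_x$ is a sum of two Oseledets subspaces. Everything else is a routine assembly of Oseledets' theorem, Poincar\'e recurrence, the no-singularity lemma, and the ergodic-decomposition compatibility of Gibbs $u$-states.
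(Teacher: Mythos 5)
Your proof is correct and follows essentially the same route as the paper: pair a central direction with the flow direction $\langle X\rangle$, use that the exponent along $X$ vanishes (via Poincar\'e recurrence and the absence of singularities), and apply the Gibbs sectional expanding condition to the resulting $2$-plane to conclude the remaining exponent is positive. Your treatment is in fact slightly more careful than the paper's one-line argument, since you separately handle the degenerate case of a second zero exponent and justify the identity between the determinant growth rate of the plane and the sum of the two Oseledets exponents.
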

\begin{proof}
Let $\mu$ be a Gibbs $u$-state of $\phi$. For $\mu$-almost every point $x$, let $v$ be a non-zero vector in $E^c\setminus <X>$. We consider the plane $L_x$ spanned by $v$ and $X(x)$. Since the Lyapunov exponent along $X(x)$ is zero, the Gibbs sectional expansion condition implies that
$$
\lim_{t\to +\infty}\frac{1}{t}\log\|D\phi_t(v)\|=\lim_{t\to +\infty}\frac{1}{t}\log \left|{\rm det}D\phi_t|_{L_x}\right|>0.
$$
This completes the proof.
\end{proof}

Now we show the existence of physical measures for flows with Gibbs sectional expanding center. 

\begin{theorem}\label{Gibbs expanding exist of physical measure}
Let $\phi$ be the flow generated by a $C^{2}$ vector field $X$, and let $\Lambda$ be an attractor with partially hyperbolic splitting 
$$
T_{\Lambda}M=E^{ss}\oplus E^{c}\oplus E^{uu}.
$$ 
If $E^{c}$ is Gibbs sectional expanding, then there exist physical measures supported on $\Lambda$. Moreover, there exists a subset $\Omega\subset B(\Lambda)$ of full Lebesgue measure such that for any $x\in \Omega$, every ergodic component of each measure of $\M(x)$ is a physical measure supported on $\Lambda$.
\end{theorem}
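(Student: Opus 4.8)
The goal is to produce physical measures supported on $\Lambda$ and, moreover, a full-Lebesgue subset $\Omega\subset B(\Lambda)$ on which every ergodic component of every limit measure in $\mathcal M(x)$ is physical. The natural route is to combine the Gibbs $u$-state machinery of \cite{CDJ20} with the Gibbs sectional expanding hypothesis, via Lemma \ref{nz}, to upgrade ``SRB'' to ``physical.'' First I would invoke Lemma \ref{lemme 3.2}: there is a full Lebesgue measure set $\Omega_0\subset B(\Lambda)$ such that for every $x\in\Omega_0$ and every $\mu\in\mathcal M(x)$, the measure $\mu$ is a Gibbs $u$-state, i.e. $h_\mu(\phi_1,\mathcal F^u)=\int\log|\det D\phi_1|_{E^{uu}}|\,d\mu$. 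By Lemma \ref{lemma 3.4} every ergodic component of such a $\mu$ is again a Gibbs $u$-state. Now fix $x\in\Omega_0$, $\mu\in\mathcal M(x)$, and an ergodic component $\nu$ of $\mu$; then $\nu$ is a Gibbs $u$-state, and by Lemma \ref{nz} all Lyapunov exponents of $\nu$ along $E^c$ are positive except the one along the flow direction $\langle X\rangle$, which is zero (here one uses Lemma \ref{System is not a singularity} to ensure $X\neq 0$ on $\Lambda$ so the flow direction genuinely sits inside $E^c$).

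**From Gibbs $u$-state to SRB to physical.** The next step is to check that such a $\nu$ is an SRB measure. Since $E^{ss}$ is uniformly contracted, $E^{uu}$ uniformly expanded, and along $E^c$ the exponents are positive except for the zero exponent on $\langle X\rangle$, the sum of positive Lyapunov exponents of $\nu$ equals $\int\log|\det D\phi_1|_{E^c\oplus E^{uu}}|\,d\nu$ minus nothing along the center except the flow line — more precisely $\sum\lambda^+ = \int \log|\det D\phi_1|_{E^{uu}}|\,d\nu + (\text{center contribution})$, and since the whole center determinant equals the product of center exponents (the zero flow exponent contributing $0$), one gets $\sum\lambda^+(\nu)=\int\log|\det D\phi_1|_{E^{cu}}|\,d\nu$. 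On the other hand, applying Lemma \ref{lemma 3.3} with the dominated splitting $E=E^{ss}\oplus E^c$... wait — actually the clean way is to apply Lemma \ref{lemma 3.3} to the dominated splitting $(E^{ss})\oplus_\prec(E^c\oplus E^{uu})$, which gives $h_\mu(\phi_1)\ge\int\log|\det D\phi_1|_{E^{cu}}|\,d\mu$ for $x$ in a full measure subset $\Omega_1$; this passes to the ergodic component $\nu$. Combined with Ruelle's inequality (Lemma \ref{Ruelle inequality}) and the exponent computation, $h_\nu(\phi_1)=\sum\lambda^+(\nu)$, so $\nu$ is SRB. Because the exponents of $\nu$ along $E^c$ are all strictly positive except the zero flow exponent, $\nu$ is a \emph{hyperbolic} SRB measure (in the sense transverse to the flow); since $X$ is $C^2$, the classical theory (Pesin theory for flows, as in the fact quoted in the introduction that any ergodic hyperbolic SRB measure is physical) shows $\nu$ is a physical measure, and its support lies in $\Lambda$ since $\Lambda$ is an attractor and $x\in B(\Lambda)$.

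**Identifying $\Omega$ and the main obstacle.** Set $\Omega=\Omega_0\cap\Omega_1$, still of full Lebesgue measure in $B(\Lambda)$. For $x\in\Omega$ and $\mu\in\mathcal M(x)$, the ergodic decomposition (Proposition \ref{Ergodic decomposition of flow}) writes $\mu=\int\eta_z\,d\mu(z)$, and the argument above shows each ergodic component is physical and supported on $\Lambda$; in particular $\mathcal M(x)\neq\emptyset$ (by compactness of the space of invariant measures on $\Lambda$, which is weak-$*$ compact), so physical measures supported on $\Lambda$ exist. I expect the main obstacle to be the passage ``hyperbolic SRB $\Rightarrow$ physical'' in the flow setting: unlike diffeomorphisms one must handle the zero exponent along $\langle X\rangle$ and build Pesin stable/unstable laminations for the time-one map $\phi_1$ that are simultaneously saturated by flow lines, so that the absolute continuity of the stable lamination (giving positive Lebesgue measure basins) genuinely applies. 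This requires knowing $\nu$ is not supported on singularities — guaranteed by Lemma \ref{System is not a singularity} — and a careful Fubini-type argument along the center-stable lamination. A secondary technical point is ensuring the entropy formula from Lemma \ref{lemma 3.3} descends to ergodic components; this follows because the inequality $h_\mu(\phi_1)\ge\int\log|\det D\phi_1|_{E^{cu}}|\,d\mu$ together with Ruelle's inequality forces equality, and equality in such an affine inequality is inherited by almost every ergodic component.
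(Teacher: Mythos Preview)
Your proposal is correct and follows essentially the same route as the paper: produce $\Omega$ via Lemmas \ref{lemme 3.2} and \ref{lemma 3.3}, use Lemma \ref{lemma 3.4} so that ergodic components stay Gibbs $u$-states, apply Lemma \ref{nz} to force the center exponents (off the flow direction) positive, deduce that each ergodic component is a hyperbolic SRB measure, and conclude physicality. The paper carries out the step you flagged as the main obstacle explicitly rather than citing it: from the entropy formula it invokes Ledrappier--Young to get absolute continuity of conditionals along Pesin unstable manifolds tangent to $E^c_\nu\oplus E^{uu}$, picks a local unstable leaf $W^u_{loc}(x)$ with Lebesgue-a.e.\ point in $B(\nu)$, then saturates by a short flow segment and the strong stable foliation to form an open set $D_s$ on which absolute continuity of $W^{ss}$ yields ${\rm Leb}(B(\nu))>0$; your ``Fubini-type argument along the center-stable lamination'' is exactly this construction, with the flow saturation supplying the missing transverse dimension coming from $\langle X\rangle$.
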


\begin{proof}
By Lemma \ref{lemme 3.2} and Lemma \ref{lemma 3.3}, there is $\Omega\subset B(\Lambda)$ with full Lebesgue measure such that for any $x\in \Omega$, each $\mu\in \M(x)$ is a Gibbs $u$-state and satisfies 
$$
h_{\mu}(\phi_1)\ge\int |{\rm det} D\phi_1|_{E^c\oplus E^{uu}}|d\mu.
$$
Since $E^c$ is Gibbs sectional expanding, Lemma \ref{nz} implies that the Lyapunov exponents of $\mu$ along $E^c$ are non-negative. Thus, Lemma \ref{Ruelle inequality} gives 
$$
\int |{\rm det} D\phi_1|_{E^c\oplus E^{uu}}|d\mu=\int \sum\lambda^{+}d\mu.
$$
Therefore, 
$$
h_{\mu}(\phi_1)=\int |{\rm det} D\phi_1|_{E^c\oplus E^{uu}}|d\mu=\int \sum\lambda^{+}d\mu.
$$
Moreover, by applying Lemma \ref{nz}, Lemma \ref{lemma 3.4} and Oseledets Theorem,  for every ergodic component $\nu$ of $\mu$, there exists the finer invariant splitting $E^c=E_{\nu}^{c}\oplus <X>$ on a full $\nu$-measurable set such that $\nu$ has positive Lyapunov exponents along $E_{\nu}^{c}$. 
It follows from Lemma \ref{lemma SRB} that
$$
h_{\nu}(\phi_1)=\int |{\rm det} D\phi_1|_{E_{\nu}^{c}\oplus E^{uu}}|d\nu=\int \sum\lambda^{+}(x)d\nu(x).
$$
Thus, the conditional measures along local Pesin unstable manifolds tangent to $E_{\nu}^{c}\oplus E^{uu}$ are equivalent to the Lebesgue measures (see e.g. \cite[Theorem 13.1.2]{BP07}). Since $\nu(B(\nu))=1$, there exists a local unstable manifold $W^u_{loc}(x)$ tangent to $E_{\nu}^{c}\oplus E^{uu}$ such that Lebesgue almost every point of $W^u_{loc}(x)$ belongs to $B(\nu)$.
Take $s>0$ and put
$$
D_s=\bigcup_{y\in \phi_{[-s,s]}W^u_{loc}(x)}W^{ss}(y), 
$$
where $\phi_{[-s,s]}W^u_{loc}(x)=\left\{\phi_t(z): z\in W^u_{loc}(x), t\in [-s,s]\right\}$ and $W^{ss}(y)$ denotes the strong stable manifold of $y$ (tangent to $E^{ss}$).
Observe that $B(\nu)$ is $s$-saturated, that is, if $z\in B(\nu)$, then $W^{ss}(z)\subset B(\nu)$. By absolute continuity of stable lamination (see e.g. \cite[Theorem 7.1]{P04}), we know that Lebesgue almost every point of $D_s$ belongs to $B(\nu)$. Thus,
$$
{\rm Leb}(B(\nu))\ge{\rm Leb}(D_s\cap B(\nu))={\rm Leb}(D_s)>0.
$$ 
This shows that $\nu$ is a physical measure.
\end{proof}

\subsection{The finiteness and uniqueness of physical measures}\label{f}
\subsubsection{Linear Poincar\'{e} flow}

In order to prove the finiteness of physical measures for systems considered in Theorem \ref{TheB}, one need to find local unstable manifolds with uniform size for typical points . In the case of vector field, the main difficulty is to control the angle between the subspace with positive Lyapunov exponents and the direction of vector field. In order to conquer this, we next introduce the linear Poincar\'{e} flow. It is an induced flow of the original flow $\phi$ that the dynamics along flow direction somehow is ignored.

\medskip
For any $x\in M\setminus {\rm Sing}(X)$, we define the normal space of $X(x)$ by
$$
\N_{x}=\left\{v\in T_{x}M:\ \langle v, X(x)\rangle=0 \right\},
$$ 
where $\langle\cdot,\cdot\rangle$ is inner product on $T_{x}M$ given by Riemannian metric. Denote by $\pi_x : T_xM \mapsto \N_x$ the orthogonal projection of $T_xM$ onto $\N_x$. 

\medskip
\paragraph{\bf{Linear Poincar\'{e} flow}}
For any $x\in  M\setminus  {\rm Sing}(X)$, $t\in \mathbb{R}$, the \emph{linear Poincar\'{e} flow}
$\psi_t: \N_x\to \N_{\phi_t(x)}$ is defined by $\psi_t(v)=\pi_{\phi_t(x)}D\phi_t(v)$ for every $v\in \N_x$. More precisely,
$$
\psi_{t}(v)=D\phi_{t}(v)-\frac{\left\langle D\phi_{t}(v),X(\phi_{t}(x))\right\rangle}{\parallel X(\phi_{t}(x))\parallel^{2}}X(\phi_{t}(x)), \quad \forall v\in \mathscr{N}_x.
$$

\begin{figure}[h]
	\centering
	\includegraphics[width=0.60\textwidth]{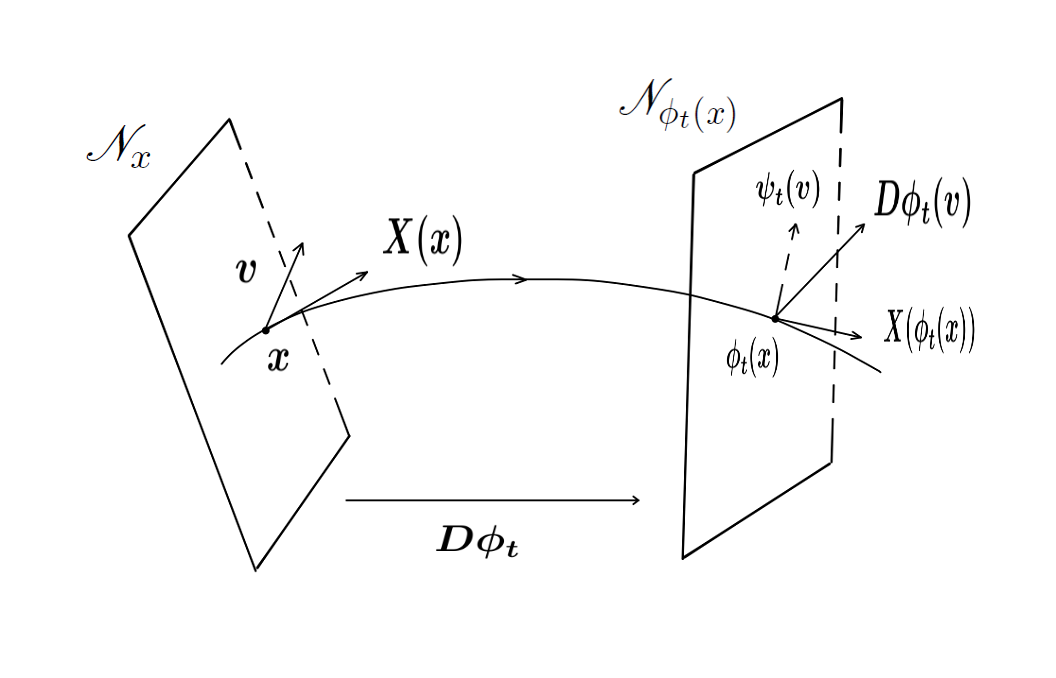}
	\caption{linear Poincar\'{e} flow}
\end{figure}

\noindent
\paragraph{\bf{Dominated splitting for linear Poincar\'{e} flow}} Let $\Lambda\subset M\setminus {\rm Sing}(X)$ be a $\phi$-invariant set, we say that the linear Poincar\'{e} flow admits a dominated splitting on $\Lambda$, if there exists a $\psi_{t}$-invariant splitting 
$$
\N_{\Lambda}=\mathscr{E}\oplus \mathscr{F}
$$ 
and constants $C>0$, $\eta>0$ such that
$$
\| \psi_{t}|_{\E(x)}\|\cdot\| \psi_{-t}|_{\F({\phi_{t}(x)})}\|\le C{\rm e}^{-\eta t}\quad \textrm{for every}~x\in \Lambda ~\textrm{and}~t>0. 
$$

\medskip
\paragraph{\bf{Partially hyperbolic splitting for linear Poincar\'{e} flow}} Given a $\phi$-invariant set $\Lambda\subset M\setminus {\rm Sing}(X)$, then the $\psi_t$-invariant splitting 
$$
\N_{\Lambda}=\N^{s}\oplus \N^{c}\oplus \N^{u}
$$ 
is partially hyperbolic for linear Poincar\'{e} flow, if there exist constants $C>0$, $\eta>0$ such that for every $x\in \Lambda$ and $t>0$,  
\begin{itemize}
\smallskip
\item
$$
\big\| \psi_{t}|_{\N^s_{x}}\big\|\cdot \big\| \psi_{-t}|_{\N^c_{\phi_t(x)}\oplus \N^u_{\phi_t(x)}}\big\|\le C{\rm e}^{-\eta t},
$$
$$ 
\big\| \psi_{t}|_{\N^s_x\oplus \N^c_x}\big\|\cdot\big\| \psi_{-t}|_{\N^u_{\phi_{t}(x)}}\big\|\le C{\rm e}^{-\eta t};
$$
\item $\| \psi_{t}(v)\|\le C{\rm e}^{-\eta t}\| v\|, \forall v\in  \N_x^{s}$;
\smallskip
\item $\| \psi_{-t}(v)\| \le C{\rm e}^{-\eta t}\|v\|,\  \forall v\in \N_x^{u}.$
\end{itemize}


\medskip
Assume that $\Lambda\subset M\setminus {\rm Sing}(X)$ is a $\phi$-invariant set with a partially hyperbolic splitting 
$$
T_{\Lambda}M=E^{ss}\oplus E^{c} \oplus E^{uu}.
$$
By taking $\N_x^s=\pi_x E_x^{ss}$, $\N_x^c=\pi_x E_x^{c}$, $\N_x^u=\pi_x E_x^{u}$ for every $x\in \Lambda$, a standard argument (see e.g. \cite[Theorem 2.27]{AP}) shows that 
$$
\N_{\Lambda}=\N^{s}\oplus \N^{c}\oplus \N^{u}
$$
is a partially hyperbolic splitting on $\Lambda$ for linear Poincar\'{e} flow. Noting that
\begin{itemize}
\smallskip
\item[--] $\N^{s}$(resp. $\N^u$) is uniformly contracting (resp. expanding) w.r.t. $\psi_t$;
\smallskip
\item[--] ${\rm dim} \N^{c}=\dim E^{c}-1$.
\end{itemize}

\subsubsection{Uniform estimate on unstable manifolds}
For $x\in M\setminus {\rm Sing}(X)$, for any $\beta>0$ small enough, we consider the following normal manifold of $x$:
$$
\Sigma_{x}(\beta)={\rm exp}_{x}\N_{x}(\beta),
$$
where  
$
\N_x(\beta)=\{v\in \N_x: \| v\|<\beta \}.
$

Note that for any $T>0$, there exists $\beta>0$ such that the flow $\phi$ defines a holonomy map $P_{x,T}$ from $\Sigma_{x}(\beta)$ to $\Sigma_{\phi_{T}(x)}(\beta)$. Sometimes it is abbreviated to $P_{T}$. Similarly, we can define $P_{-T}$.

The proof of the following result can be found in \cite[Lemma 2.18]{GaY18}.

\begin{lemma}\label{dim estimate}
Let $\Lambda$ be a compact invariant set of flow $\phi$ without singularities, assume that $\Lambda$ admits a partially hyperbolic splitting $\N_{\Lambda}=\N^{s}\oplus \N^{c}\oplus \N^{u}$ for linear Poincar\'{e} flow. Then for any $\eta>0,\ T>0$, there exist $\delta>0$, $\beta>0$ such that for any $x\in \Lambda$, if
$$
\prod_{i=0}^{n-1}\left\|\psi_{-T}|_{\N^{c}_{\phi_{-iT}(x)}}\right\|\le {\rm e}^{-\eta n},
$$ 
then there exists a disk $W^{c}(x)$ centered at $x$ on $\Sigma_x(\beta)$ with following properties:
\begin{itemize}
\smallskip
\item $W^{c}(x)$ is tangent to $\N^{c}\oplus \N^{u}$ with dimension ${\rm dim} (\N^{c}\oplus \N^{u})$;
\smallskip
\item the size of $W^{c}(x)$ is \ $\delta$;
\smallskip
\item $\lim_{t\to +\infty}{\rm diam}(P_{-t}(W^{c}(x))=0$.
\end{itemize} 
In particular, when $x$ is a periodic point, then the unstable manifold of ${\rm Orb}(x)$ contains a sub-manifold of size $\delta$ with dimension $\dim M-\dim \N^{s}$.
\end{lemma}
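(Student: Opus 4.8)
The plan is to realize $W^{c}(x)$ as a (pseudo-)stable manifold, for the backward-time dynamics, of the sequence of Poincar\'e holonomies along the orbit of $x$, constructed by a graph-transform argument. The uniform size of $W^c(x)$ will come solely from the uniform domination inside the partially hyperbolic splitting for $\psi_t$, whereas the shrinking ${\rm diam}(P_{-t}(W^{c}(x)))\to 0$ will use the orbit-averaged central estimate in the hypothesis, and the periodic statement will be a specialization.

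First I would fix $T,\eta$, set $x_{i}=\phi_{-iT}(x)$, and write $\N^{cu}=\N^{c}\oplus\N^{u}$. Since $\Lambda$ is compact and contains no singularity, $\|X\|$ is bounded below on $\Lambda$; by the tubular neighborhood theorem there is $\beta>0$, depending only on $\Lambda,X,T$, so that the normal sections $\Sigma_{x_{i}}(\beta)$ are embedded transversals and the holonomies $h_{i}:=P_{-T}\colon\Sigma_{x_{i}}(\beta')\to\Sigma_{x_{i+1}}(\beta)$ are $C^{1}$ diffeomorphisms onto their images with $h_{i}(x_{i})=x_{i+1}$ and $Dh_{i}(x_{i})=\psi_{-T}\colon\N_{x_{i}}\to\N_{x_{i+1}}$, a map preserving the splitting $\N=\N^{s}\oplus\N^{cu}$. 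The three structural facts I will use are: (i) $\psi_{-T}$ uniformly expands $\N^{s}$ and uniformly contracts $\N^{u}$; (ii) partial hyperbolicity gives the uniform domination $\N^{cu}\succ\N^{s}$, i.e. $\|\psi_{-T}|_{\N^{cu}_{x_{i}}}\|\cdot\|\psi_{T}|_{\N^{s}_{x_{i+1}}}\|\le Ce^{-\eta T}$, which becomes a uniform strict contraction $<1$ in a suitable adapted metric (equivalent to the Riemannian one on compact $\Lambda$); and (iii) the hypothesis, together with submultiplicativity of operator norms, gives $\|\psi_{-nT}|_{\N^{c}_{x}}\|\le\prod_{i=0}^{n-1}\|\psi_{-T}|_{\N^{c}_{x_{i}}}\|\le e^{-\eta n}$, whence, with (i) and invariance of $\N^{cu}=\N^{c}\oplus\N^{u}$, $\|\psi_{-nT}|_{\N^{cu}_{x}}\|\le C'e^{-\eta'n}$ for some $\eta'>0$.

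Second, using (ii) and the uniform $C^{1}$-smallness of the nonlinear parts of the $h_{i}$ (arranged by shrinking $\beta$), the standard graph transform on the space of sequences of $1$-Lipschitz maps $\sigma_{i}\colon\N^{cu}_{x_{i}}(\delta)\to\N^{s}_{x_{i}}(\beta)$ with $\sigma_{i}(0)=0$ is a uniform contraction; its fixed point $(\sigma_{i})$ defines $W^{c}(x):=\mathrm{graph}(\sigma_{0})\subset\Sigma_{x}(\beta)$, which is tangent to $\N^{c}\oplus\N^{u}$ at $x$, of dimension ${\rm dim}\,\N^{cu}={\rm dim}\,\N^{c}+{\rm dim}\,\N^{u}$, with $C^{1}$ regularity supplied by the invariant cone field / $C^1$ section theorem (cf. \cite{GaY18}). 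Since only (ii) and uniform $C^{1}$-data enter, $\delta$ and $\beta$ can be chosen independently of $x$. For ${\rm diam}(P_{-t}(W^{c}(x)))\to0$ I would use (iii): by invariance, $P_{-nT}(W^{c}(x))\subset\Sigma_{x_{n}}(\beta)$ is again a $1$-Lipschitz graph whose $\N^{cu}$-extent is transported by $\psi_{-nT}|_{\N^{cu}_{x}}$ up to bounded distortion, hence is $\le C''e^{-\eta'n}\delta$, while the $\N^{s}$-extent is controlled by the Lipschitz bound; for $t\in[nT,(n+1)T)$ write $P_{-t}=P_{-(t-nT)}\circ P_{-nT}$ and bound $DP_{-s}$, $s\in[0,T]$, uniformly on the $\beta$-neighborhood. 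For the periodic case: if $x$ has period $p$, then (iii) gives $\|\psi_{-nT}|_{\N^{c}_{x}}\|\le e^{-\eta n}$ for all $n$, and since the limit $\lim_{t\to\infty}\frac1t\log\|\psi_{-t}|_{\N^{c}_{x}}\|$ exists along a periodic orbit it is then $<0$, so every eigenvalue of the linear return map $\psi_{-p}|_{\N^{c}_{x}}$ lies in the open unit disc; together with the uniform contraction of $\N^{u}$ and expansion of $\N^{s}$ by $\psi_{-p}$, this makes $\psi_{-p}\colon\N_{x}\to\N_{x}$ hyperbolic with stable subspace $\N^{cu}_{x}$. Because $D\phi_{p}(x)X(x)=X(x)$ and $\langle X(x)\rangle\subset E^{c}_{x}$, the orbit ${\rm Orb}(x)$ is a hyperbolic periodic orbit whose unstable bundle is $E^{c}\oplus E^{uu}$, of dimension $1+{\rm dim}\,\N^{cu}={\rm dim}\,M-{\rm dim}\,\N^{s}$, and $\bigcup_{|s|<\varepsilon}\phi_{s}(W^{c}(x))$ is a submanifold of $W^{u}({\rm Orb}(x))$ of that dimension whose size is again bounded below by a uniform constant, since $\|X\|$ is bounded below on $\Lambda$.

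The only genuinely non-routine point, and where I expect the real work, is separating cleanly the uniform part of the hyperbolicity (the domination $\N^{cu}\succ\N^{s}$, which must produce the manifold together with a size $\delta$ and a section radius $\beta$ that do not depend on $x$, in particular not on whether or what the period is) from the non-uniform part (the merely orbit-averaged estimate (iii), from which one must still extract genuine exponential decay of ${\rm diam}(P_{-t}(W^{c}(x)))$). The graph transform and the $C^{1}$ regularity are classical and can be quoted. A secondary technicality is the uniform set-up of the holonomies $P_{\pm T}$ and of the adapted metric on $\Lambda$, and, in the periodic case, the passage from hyperbolicity of the linear Poincar\'e return map to hyperbolicity of the periodic orbit in $M$.
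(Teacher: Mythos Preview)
The paper does not prove this lemma; it simply cites \cite[Lemma 2.18]{GaY18} and moves on. Your proposal is therefore not being compared against a proof in the paper but against an external reference, and the graph-transform/plaque-family strategy you outline is indeed the standard route used in that literature for producing uniform-size (un)stable disks at Pliss-type points along orbits of the linear Poincar\'e flow.

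Your separation of the argument into (ii) domination $\Rightarrow$ existence of a locally invariant $\N^{cu}$-plaque family of uniform size $\delta$, and (iii) the orbitwise estimate $\Rightarrow$ backward shrinking, is the right conceptual decomposition. One technical point worth tightening: plaque families coming from domination alone are only \emph{locally} invariant, so the statement ``$P_{-nT}(W^{c}(x))$ is again a $1$-Lipschitz graph'' inside $\Sigma_{x_n}(\beta)$ needs the inductive observation that the backward image stays small enough to remain in the chart; this is exactly where the hypothesis $\prod_{i=0}^{n-1}\|\psi_{-T}|_{\N^{c}_{x_i}}\|\le e^{-\eta n}$ for \emph{every} $n$ (not merely asymptotically) is used, and it closes the bootstrap cleanly. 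In the periodic case your reduction to hyperbolicity of the Poincar\'e return map is correct, though you could also just observe that the already-constructed $W^{c}(x)$ is contained in the unstable manifold of ${\rm Orb}(x)$ and then thicken by the flow.
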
 

\subsubsection{Periodic approximation}
 
Let $\gamma$ be a periodic orbit, $\pi(\gamma)$ is the period of $\gamma$. For any $x\in \gamma$, let 
$$
\mu_{\gamma}=\frac{1}{\pi(\gamma)}\int_{0}^{\pi(\gamma)}\delta_{\phi_{t}(x)}dt
$$ 
be the \emph{periodic measure} of $\gamma$.

We say that a $\phi$-invariant measure $\mu$ is a \emph{hyperbolic} if it is not supported on singularities and the Lyapunov exponents of $\mu$ is nonzero, except along the direction of the flow.


\begin{definition}
Let $\gamma$ be a hyperbolic periodic orbit, and let $\mu$ be a hyperbolic ergodic measure of $C^{2}$ flow $\phi$, if for $\mu$-almost every $x$,
$$
W^{s}(x)\pitchfork W^{u}(\gamma)\ne \emptyset,\ \ W^{u}(x)\pitchfork W^{s}(\gamma)\ne \emptyset,
$$
then we say that $\gamma$ is related to $\mu$ 
\end{definition}

Let $\{\varphi_{n}\}_{n=0}^{\infty}$ be a countable dense subset in $C^{0}(M,\mathbb{R})$, we can define a distance between two probability measures $\mu,\nu$ by 
$$
\mathrm{d}(\mu,\nu)=\sum_{n=0}^{\infty}\frac{|\int \varphi_{n}d\mu- \varphi_{n}d\nu|}{2^{n}\cdot \sup_{x\in M} |\varphi_{n}(x)|}.
$$ 

Katok \cite{Kat95} proves that any hyperbolic measure can be approximated by a periodic measure for $C^{2}$ diffeomorphisms. For the case of flow, see \cite{LiY12}.

\begin{lemma}\label{Katok shadowing}
Let $\phi$ be the flow generated by a $C^{2}$ vector field $X$. Assume that $\mu$ is an ergodic measure which is not supported on singularities. If $\mu$ is a hyperbolic ergodic measure, then for any $\varepsilon>0$, there exists a hyperbolic periodic orbit $\gamma$ such that 
\begin{itemize}
\smallskip
\item $\gamma$ is related to $\mu$;
\smallskip
\item ${\rm d}(\mu, \mu_{\gamma})<\varepsilon$.
\end{itemize}
\end{lemma}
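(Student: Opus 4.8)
The plan is to deduce this from the Katok closing lemma for $C^2$ flows of Li--Yang \cite{LiY12}, checking in addition that the periodic orbit produced there can be taken related to $\mu$ in the sense of the preceding definition. First I would set up the nonuniform hyperbolic structure: since $\mu$ is hyperbolic, ergodic and gives no mass to ${\rm Sing}(X)$, Oseledets' theorem yields, for $\mu$-a.e.\ $x$, a splitting $T_xM=E^s_x\oplus\langle X(x)\rangle\oplus E^u_x$ with all exponents along $E^s$ negative, along $E^u$ positive, and zero along the flow direction (the last because $\|X\|$ is bounded away from $0$ on ${\rm supp}\,\mu$, exactly as in the proof of Lemma \ref{System is not a singularity}). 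Fix $\chi>0$ below the smallest absolute value of these nonzero exponents and take the associated Pesin blocks $\Lambda_1\subset\Lambda_2\subset\cdots$ on which the contraction/expansion rates, the sizes of the local stable and unstable manifolds, and the angles between the three bundles are all controlled; choose $\ell_0$ with $\mu(\Lambda_{\ell_0})>\tfrac12$.

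Next I would run the uniformly hyperbolic closing argument on the block. Pick a point $x\in\Lambda_{\ell_0}$ which is generic for $\mu$ (its Birkhoff averages along $\phi$ converge to $\mu$) and whose orbit returns to $\Lambda_{\ell_0}$ with frequency close to $\mu(\Lambda_{\ell_0})$; by Poincar\'{e} recurrence inside the block there is a large time $T$ with $x,\phi_T(x)\in\Lambda_{\ell_0}$ and $\phi_T(x)$ arbitrarily close to $x$. Working on the normal cross-sections $\Sigma_x(\beta)$ and $\Sigma_{\phi_T(x)}(\beta)$ and with the linear Poincar\'{e} flow $\psi_t$ introduced above, the holonomy map $P_T$ is, on the block, a uniformly hyperbolic saddle near the identity, so a graph-transform fixed-point argument produces a periodic point $p$ near $x$ with period $\pi(\gamma)$ close to $T$ whose orbit $\gamma={\rm Orb}(p)$ remains in a small neighbourhood of $\{\phi_t(x):0\le t\le T\}$. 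Uniform hyperbolicity on the block forces $\gamma$ to be a hyperbolic periodic orbit whose exponents are $\chi$-close to those of $\mu$, hence nonzero except along $X$. Since the orbit segment of $x$ spends a definite fraction of its time in $\Lambda_{\ell_0}$, $\gamma$ shadows it, and $\pi(\gamma)$ is close to $T$, the time-averaged periodic measure $\mu_\gamma$ is weak-$*$ close to $\frac1T\int_0^T\delta_{\phi_t(x)}\,dt$, which is close to $\mu$; taking $T$ large gives ${\rm d}(\mu,\mu_\gamma)<\varepsilon$.

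Finally I would extract the homoclinic relation. The shadowing can be arranged so that $p$ is $\ell_0$-uniformly close to $\phi_s(x)$ for some $s$ with $\phi_s(x)\in\Lambda_{\ell_0}$; then $W^u_{\rm loc}(p)$ and $W^s_{\rm loc}(\phi_s(x))$ (resp.\ $W^s_{\rm loc}(p)$ and $W^u_{\rm loc}(\phi_s(x))$) have lengths bounded below in terms of $\ell_0$ and are $C^1$-close, hence meet transversally; invariance of the (un)stable manifolds under $\phi$ together with the fact that for $\mu$-a.e.\ $x$ the stable and unstable manifolds have uniform size and the orbit of $x$ is dense in ${\rm supp}\,\mu$ then upgrades this to $W^s(x)\pitchfork W^u(\gamma)\ne\emptyset$ and $W^u(x)\pitchfork W^s(\gamma)\ne\emptyset$ for $\mu$-a.e.\ $x$, i.e.\ $\gamma$ is related to $\mu$. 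The main obstacle is precisely the flow direction: one cannot invoke Katok's closing lemma for the time-one map $\phi_1$, since a $\mu$-typical orbit returns near itself only under the full flow, so the whole argument must be carried out on cross-sections via $\psi_t$, and one must control the reparametrisation carefully so that (i) $\pi(\gamma)$ is genuinely close to the shadowing time $T$ --- essential since $\mu_\gamma$ is a time average --- and (ii) the zero exponent along $\langle X\rangle$ survives in the limit, which again uses the absence of singularities on ${\rm supp}\,\mu$.
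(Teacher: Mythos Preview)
The paper does not give its own proof of this lemma: it simply states the result and cites Katok \cite{Kat95} for diffeomorphisms and Lian--Young \cite{LiY12} for the flow case. Your sketch is therefore not competing against any argument in the paper, and what you have written is essentially the standard outline of the Katok closing lemma for flows (Pesin blocks, closing via the Poincar\'e return on normal sections, approximation of $\mu$ in the weak-$*$ topology, homoclinic relation from uniform estimates on the block), which is indeed the content of the cited references.

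One imprecision worth tightening: in the last paragraph you write ``for $\mu$-a.e.\ $x$ the stable and unstable manifolds have uniform size.'' This is not true as stated; Pesin local manifolds have size depending on the block index, not uniformly in $x$. The correct statement is that for $\mu$-a.e.\ $y$, ergodicity forces the orbit of $y$ to visit $\Lambda_{\ell_0}$ arbitrarily close to $p$, and \emph{at those return times} the local stable/unstable manifolds of $\phi_t(y)$ have the uniform size coming from $\ell_0$; the transverse intersection is then obtained at that iterate and propagated back by invariance. Density of the orbit in ${\rm supp}\,\mu$ alone is not sufficient --- you need the return to happen inside the Pesin block. With this rephrasing the argument goes through.
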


\subsubsection{Complete the proof of Theorem \ref{TheB}}
In the rest of this section, $\Lambda$ will be assumed as an attractor with partially hyperbolic splitting 
$$
T_{\Lambda}M=E^{uu}\oplus E^{c}\oplus E^{ss}.
$$ 
By Lemma \ref{System is not a singularity}, there is no singularity on $\Lambda$. Thus, one has the corresponding partially hyperbolic splitting for linear Poincar\'{e} flow:
$$
\N_{\Lambda}=\N^{u}\oplus \N^{c}\oplus \N^{s}.
$$

The following result asserts that Gibbs sectional expanding implies the uniform estimate of linear Poincar\'{e} flow on center direction.

\begin{proposition}\label{Gibbs estimate}
Let $\phi$ be a flow generated by a $C^{2}$ vector field $X$. Assume that $\Lambda$ is an attractor exhibiting a partially hyperbolic splitting 
$$
T_{\Lambda}M=E^{uu}\oplus E^{c}\oplus E^{ss}.
$$ 
If $E^{c}$ is Gibbs sectional expanding, then there exist $\eta>0, T\in \NN$ such that 
$$
\int \log \parallel \psi_{-T}|_{\N^{c}}\parallel d\mu<-\eta
$$ 
for any Gibbs $u$-state $\mu$ supported on $\Lambda$.	
\end{proposition}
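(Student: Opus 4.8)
The plan is to rephrase the desired estimate in terms of the subadditive cocycle generated by $\log\|\psi_{-1}|_{\N^{c}}\|$ over the backward time–one map $\phi_{-1}$, to show that its asymptotic average is strictly negative for every Gibbs $u$-state, and then to upgrade this to a uniform bound realized by a single iterate $T$ by exploiting the compactness of the space of Gibbs $u$-states.

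First I would set up the cocycle. Since $\Lambda$ carries no singularity (Lemma \ref{System is not a singularity}), the subbundle $\N^{c}$ and the operators $\psi_{-n}|_{\N^{c}}$ vary continuously with the base point, so $b_{n}(x):=\log\|\psi_{-n}|_{\N^{c}_{x}}\|$ defines a continuous, bounded function on $\Lambda$. From $\psi_{-(n+m)}=\psi_{-n}\circ\psi_{-m}$ one gets the subadditivity $b_{n+m}(x)\le b_{n}(\phi_{-m}(x))+b_{m}(x)$; hence, for every $\phi$-invariant $\mu$, the sequence $n\mapsto\int b_{n}\,d\mu$ is subadditive, $\int b_{kn}\,d\mu\le k\int b_{n}\,d\mu$ for all $k$, and $g(\mu):=\lim_{n}\tfrac1n\int b_{n}\,d\mu=\inf_{n}\tfrac1n\int b_{n}\,d\mu$ exists. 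Since $\int\log\|\psi_{-T}|_{\N^{c}}\|\,d\mu=\int b_{T}\,d\mu$, the proposition reduces to finding $T\in\NN$ and $\eta>0$ with $\int b_{T}\,d\mu<-\eta$ for every Gibbs $u$-state $\mu$.

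The core step is to show $g(\mu)<0$ for every Gibbs $u$-state $\mu$. Assume first that $\mu=\nu$ is ergodic. On $E^{c}_{x}$ the orthogonal projection $\pi_{x}$ has kernel $\langle X(x)\rangle$ and image $\N^{c}_{x}$, so it induces a (in fact isometric, for the quotient norm) identification of $E^{c}_{x}/\langle X(x)\rangle$ with $\N^{c}_{x}$ that intertwines the action induced by $D\phi_{t}$ on $E^{c}/\langle X\rangle$ with $\psi_{t}|_{\N^{c}}$. Hence, by Oseledets' theorem, the Lyapunov exponents of $(\psi_{t},\N^{c})$ with respect to $\nu$ are precisely those of $(D\phi_{t},E^{c})$ after deleting the single zero exponent carried by the flow direction; by Lemma \ref{nz} (which is where the Gibbs sectional expanding hypothesis enters) these are all strictly positive, with smallest value $\lambda^{c}_{\min}(\nu)>0$. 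Consequently $\tfrac1n b_{n}\to-\lambda^{c}_{\min}(\nu)$ $\nu$-almost everywhere (the top Lyapunov exponent of the backward cocycle), so $g(\nu)=-\lambda^{c}_{\min}(\nu)<0$. For a general Gibbs $u$-state $\mu$, write its ergodic decomposition $\mu=\int\eta_{x}\,d\mu$ (Proposition \ref{Ergodic decomposition of flow}); by Lemma \ref{lemma 3.4} each $\eta_{x}$ is again a Gibbs $u$-state, hence ergodic with $g(\eta_{x})<0$, and since $\tfrac1n\int b_{n}\,d\eta_{x}$ is uniformly bounded and tends to $g(\eta_{x})$, bounded convergence gives $g(\mu)=\int g(\eta_{x})\,d\mu<0$.

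Finally I would extract uniformity. Let $\mathcal{M}_{u}$ be the set of Gibbs $u$-states supported on $\Lambda$; by Lemma \ref{lemma 3.4} it is compact in the weak-$*$ topology. Each functional $\mu\mapsto\tfrac1n\int b_{n}\,d\mu$ is weak-$*$ continuous (because $b_{n}$ is continuous on $\Lambda$), so $g=\inf_{n}\tfrac1n\int b_{n}\,d\mu$ is upper semicontinuous on $\mathcal{M}_{u}$ and attains its supremum there; by the previous step this supremum equals $-2\eta_{0}$ for some $\eta_{0}>0$. Hence the open sets $V_{n}=\{\mu\in\mathcal{M}_{u}:\tfrac1n\int b_{n}\,d\mu<-\eta_{0}\}$, $n\ge 1$, cover $\mathcal{M}_{u}$; choose a finite subcover $V_{n_{1}},\dots,V_{n_{k}}$ and set $T=n_{1}\cdots n_{k}$. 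Given $\mu\in\mathcal{M}_{u}$, pick $i$ with $\mu\in V_{n_{i}}$ and write $T=n_{i}m$; then $\int b_{T}\,d\mu\le m\int b_{n_{i}}\,d\mu<-mn_{i}\eta_{0}=-T\eta_{0}\le-\eta_{0}$, which is the asserted inequality with $\eta:=\eta_{0}$. The main obstacle is the core step: one must transfer the positivity of Lemma \ref{nz} from $E^{c}$ to $\N^{c}$ without manufacturing a spurious zero exponent, and then treat non-ergodic Gibbs $u$-states through the identity $g(\mu)=\int g(\eta_{x})\,d\mu$ together with Lemma \ref{lemma 3.4}; the passage to a single $T$ is then routine, using only compactness of $\mathcal{M}_{u}$ and the submultiplicativity $\int b_{kn}\,d\mu\le k\int b_{n}\,d\mu$ (alternatively, one could argue by contradiction, taking a weak-$*$ limit of hypothetical bad Gibbs $u$-states and showing it would violate $g<0$).
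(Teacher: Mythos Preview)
Your proposal is correct and follows essentially the same route as the paper: establish $\lim_{n}\tfrac1n\int\log\|\psi_{-n}|_{\N^{c}}\|\,d\mu<0$ for each Gibbs $u$-state via Lemma~\ref{nz}, then use compactness of $\mathcal{M}_{u}$ (Lemma~\ref{lemma 3.4}) together with submultiplicativity and a common multiple $T=\prod n_{i}$ to extract uniform $T,\eta$. The only cosmetic difference is that the paper transfers negativity from $E^{c}$ to $\N^{c}$ via the elementary inequality $\|\psi_{-t}(v)\|\le\|D\phi_{-t}(v)\|$ for $v\in\N^{c}_{x}\subset E^{c}_{x}$, rather than through your quotient-bundle identification, and it does not separate the ergodic and non-ergodic cases explicitly.
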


\begin{proof}
By Lemma \ref{nz}, for any Gibbs $u$-state $\mu$ supported on $\Lambda$, for $\mu$-almost every point $x$, we have
\begin{equation}\label{se}
\lim_{t\to \infty}\frac{1}{t}\log \| D\phi_{-t}(v)\|<0,~\forall v\in E^{c}(x)/ <X>.
\end{equation}
Note that if $v\in N^{c}(x)$, then $\left< v,<X>\right>=0$. By definition of linear Poincar\'{e} flow, one gets $\| \psi_{-t}(v)\|\le \| D\phi_{-t}(v)\|.$ Thus, (\ref{se}) yields that for $\mu$-almost every $x\in \Lambda$, we have
$$
\lim_{t\to \infty}\frac{1}{t}\log \| \psi_{-t}|_{\N^c_x}\|<0.
$$
Consequently, for any Gibbs $u$-state $\mu$, we have 
$$
\lim_{t\to \infty}\frac{1}{t}\int \log \| \psi_{-t}|_{\N^{c}}\| d\mu<0.
$$
So there is $T_{\mu}\in \NN$ and $\alpha_{\mu}>0$ such that 
$$
\frac{1}{T_{\mu}}\int \log \| \psi_{-T_{\mu}}|_{\N^{c}} \| d\mu<-\alpha_{\mu}.
$$ 
Observe that $x\mapsto \log \| \psi_{-T_{\mu}}|_{\N^{c}(x)} \|$ is continuous, there exists a neighborhood $U_{\mu}$ of $\mu$ such that 
$$
\frac{1}{T_{\mu}}\int \log \| \psi_{-T_{\mu}}|_{\N^{c}} \| d\nu<-\alpha_{\mu},\ \forall\ \nu\in U_{\mu}.
$$ 
Recall that $\mathcal{M}_{u}$ is the set of all Gibbs $u$-state. Since $\mathcal{M}_{u}$ is compact guaranteed by Lemma \ref{lemma 3.4}, one can choose finitely many Gibbs $u$-states $\mu_{1},\cdots,\mu_{\ell}$ such that their corresponding neighborhoods can cover $\mathcal{M}_{u}$. Let 
$$
T_{i}=T_{\mu_{i}}, \quad U_{i}=U_{\mu_{i}},
\quad \alpha_{i}=\alpha_{\mu_{i}}, \ 1\le i\le \ell.
$$ 
Take $T=\prod_{i=1}^{\ell}T_{i}$, $\eta=\min\left\{T\cdot{\alpha_{i}}:\ 1\le i\le \ell \right\}$. By construction, for any Gibbs $u$-state $\mu$, there exists a neighborhood $U_{i}$ such that $\mu\in U_{i}$. For each $x\in \Lambda$, we have 
$$
\left\| \psi_{-T}|_{\N^{c}(x)}\right\| \le \prod_{k=0}^{T/T_{i}-1} \left\| \psi_{-T_{i}}|_{\N^{c}(\phi_{-kT_{i}}(x))} \right\|.
$$ 
Combined with the invariance of $\mu$, one gets
\begin{eqnarray*}
\int \log \| \psi_{-T}|_{\N^{c}(x)} \| d\mu &\le&  \sum_{k=0}^{T/T_i-1}\int \log \| \psi_{-T_{i}}|_{\N^{c}(\phi_{-kT_{i}}(x))} \| d\mu \\ 
&=& \frac{T}{T_{i}}\int \log \| \psi_{-T_{i}}|_{\N^{c}(x)} \|d\mu \\
&<& -\frac{T}{T_{i}}\alpha_{i}T_{i} \\
&=& -T\cdot\alpha_{i} \\
&\le& -\eta.
\end{eqnarray*}
By the arbitrariness of $\mu$, we get the result.
\end{proof}

The following version of Pliss lemma helps us to select good points with sufficient hyperbolic property. See \cite[Lemma 2.1]{MCY18} for a proof.

\begin{lemma}\label{Pliss lemma}
Given $C\in \mathbb{R}$, if $\{a_{n}\}$ is a sequence of real numbers satisfying 
$$
\limsup_{n\to \infty}\frac{1}{n}\sum_{i=1}^{n}a_{i}<C,
$$ 
then there exists $k\in \NN$ such that 
$$
\frac{1}{n}\sum_{i=1}^{n}a_{i+k}<C,\ \forall n\in N.
$$
\end{lemma}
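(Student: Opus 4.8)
The plan is to reformulate the statement as an elementary fact about the partial sums of a real sequence that tends to $-\infty$. First I would subtract the constant: set $b_n=a_n-C$ and $S_m=\sum_{i=1}^m b_i$ (with $S_0=0$). Then the hypothesis $\limsup_{n\to\infty}\frac1n\sum_{i=1}^n a_i<C$ is exactly $\limsup_{n\to\infty}\frac1n S_n<0$, and the conclusion that $\frac1n\sum_{i=1}^n a_{i+k}<C$ for all $n$ is exactly the statement that $S_{k+n}-S_k<0$ for every $n\ge 1$, i.e. that $S_k$ strictly dominates all later partial sums $S_m$, $m>k$. So the whole lemma amounts to producing such an index $k$.

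Next I would observe that the hypothesis forces $S_m\to-\infty$: since $\limsup_m S_m/m<0$, there are $\delta>0$ and $m_0$ with $S_m<-\delta m$ for all $m\ge m_0$. Hence the set $\{S_m:m\ge 1\}$ is bounded above and attains its supremum at finitely many indices; I would let $k$ be the \emph{largest} index at which this maximum is attained. Then for every $n>k$ one has $S_n\le S_k$ by maximality, and $S_n\ne S_k$ because $k$ is the last maximizer, hence $S_n<S_k$; in particular $S_{k+n}-S_k<0$ for all $n\ge 1$. Translating back through $b_n=a_n-C$ gives $\frac1n\sum_{i=1}^n a_{i+k}<C$ for every $n$, and since $k\ge 1$ we have $k\in\NN$, which completes the argument.

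This proof is routine and I do not anticipate a real obstacle. The only point that needs a moment's care is the \emph{strict} inequality in the conclusion: it is secured by choosing the last index at which the partial sums attain their maximum (rather than an arbitrary maximizer), and the existence of such an index is exactly what the divergence $S_m\to-\infty$, itself a consequence of the strict inequality in the hypothesis, guarantees.
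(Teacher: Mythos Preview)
Your argument is correct: reducing to partial sums $S_m=\sum_{i\le m}(a_i-C)$, using $\limsup S_m/m<0$ to force $S_m\to-\infty$, and then taking $k$ to be the last index at which $\{S_m:m\ge1\}$ attains its maximum yields $S_{k+n}<S_k$ for all $n\ge1$, which is exactly the desired strict inequality. The paper does not supply its own proof of this lemma but simply refers to \cite[Lemma~2.1]{MCY18}, so there is no in-paper argument to compare with; your self-contained proof is the standard Pliss-type argument and would serve perfectly well here.
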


The following result provides a periodic approximation of ergodic physical measures.
\begin{lemma}\label{Th}
Let $\phi$ be a flow generated by a $C^{2}$ vector field $X$. Assume that $\Lambda$ is an attractor exhibiting a partially hyperbolic splitting 
$$
T_{\Lambda}M=E^{uu}\oplus E^{c}\oplus E^{ss}.
$$ 
If $E^{c}$ is Gibbs sectional expanding, then there exist $\eta>0, T>0$ such that for any ergodic physical measure $\mu$, there exists some hyperbolic periodic orbit $\gamma$ such that
\begin{itemize}
\smallskip
\item $\gamma$ is related to $\mu$;
\smallskip
\item $\mu_{\gamma}$ is ergodic for $\phi_T$;
\smallskip
\item 
$
\int \log \parallel \psi_{-T}|_{\N^{c}}\parallel d\mu_{\gamma}<-\eta.
$ 
\end{itemize}	
\end{lemma}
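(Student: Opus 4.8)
The plan is to assemble the statement from the uniform center estimate of Proposition~\ref{Gibbs estimate}, Katok's periodic approximation (Lemma~\ref{Katok shadowing}), and a weak-$*$ continuity argument. First I would fix $T\in\NN$ and $\eta_0>0$ as in Proposition~\ref{Gibbs estimate}, so that $\int \log\|\psi_{-T}|_{\N^{c}}\|\, d\nu<-\eta_0$ for every Gibbs $u$-state $\nu$ supported on $\Lambda$, and set $\eta=\eta_0/2$. Since $g(x):=\log\|\psi_{-T}|_{\N^{c}(x)}\|$ is continuous on the compact set $\Lambda$ and the distance $\mathrm{d}(\cdot,\cdot)$ metrizes the weak-$*$ topology on the compact space of Borel probability measures on $\Lambda$, the map $\nu\mapsto\int g\, d\nu$ is uniformly $\mathrm{d}$-continuous, so there is a uniform threshold $\varepsilon^{*}>0$ with $\mathrm{d}(\nu_1,\nu_2)<\varepsilon^{*}\Rightarrow|\int g\, d\nu_1-\int g\, d\nu_2|<\eta$. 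These $\eta$ and $T$ are the constants the lemma asks for, and they do not depend on $\mu$.

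Next, given an ergodic physical measure $\mu$ on $\Lambda$, I would check that $\mu$ is a hyperbolic Gibbs $u$-state, so that Lemma~\ref{Katok shadowing} applies: Lebesgue-almost every point of $B(\mu)$ lies in the full-Lebesgue-measure set of Lemma~\ref{lemme 3.2}, on which every empirical limit measure is a Gibbs $u$-state (this is the definition of the latter), and that limit along $B(\mu)$ is $\mu$, so $\mu$ is a Gibbs $u$-state; by Lemma~\ref{System is not a singularity} there is no singularity on $\Lambda$, whence Poincar\'e recurrence gives vanishing Lyapunov exponent along $\langle X\rangle$; by Lemma~\ref{nz} the $E^{c}$-exponents off $\langle X\rangle$ are positive; and the $E^{ss}$-, $E^{uu}$-exponents are uniformly negative, resp.\ positive. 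Hence $\mu$ is hyperbolic and not carried by singularities, and Lemma~\ref{Katok shadowing} applied with $\varepsilon=\varepsilon^{*}$ yields a hyperbolic periodic orbit $\gamma$ that is related to $\mu$ and satisfies $\mathrm{d}(\mu,\mu_\gamma)<\varepsilon^{*}$. Since $\int g\, d\mu<-\eta_0=-2\eta$ by Proposition~\ref{Gibbs estimate}, the choice of $\varepsilon^{*}$ gives
$$
\int \log\| \psi_{-T}|_{\N^{c}}\|\, d\mu_\gamma=\int g\, d\mu_\gamma<\int g\, d\mu+\eta<-2\eta+\eta=-\eta ,
$$
which is the first and third items with the uniform $\eta,T$.

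For the second item I would arrange that $\mu_\gamma$ is ergodic for $\phi_T$. Parametrizing $\gamma$ by flow time identifies it with $\RR/\pi(\gamma)\ZZ$, on which $\phi_T$ acts as the rotation by $T$ and $\mu_\gamma$ as normalized Lebesgue measure, so $\mu_\gamma$ is (uniquely) ergodic for $\phi_T$ precisely when $T/\pi(\gamma)\notin\QQ$. Because the periodic orbits supplied by the closing construction behind Lemma~\ref{Katok shadowing} have periods varying continuously with the shadowed periodic pseudo-orbit, one can select $\gamma$ with $\pi(\gamma)$ outside the countable set $T\cdot\QQ$ while keeping $\mathrm{d}(\mu,\mu_\gamma)<\varepsilon^{*}$ and the relation to $\mu$; then $\mu_\gamma$ is ergodic for $\phi_T$. (Should one prefer not to touch the construction, the conclusions needed downstream survive after replacing $\mu_\gamma$ by an ergodic component $\nu$ of it for $\phi_T$ with $\int g\, d\nu\le-\eta$, which exists because the $\mu_\gamma$-average of these conditional integrals equals $\int g\, d\mu_\gamma<-\eta$.) The output $\gamma$ then enters Lemma~\ref{dim estimate} via the Pliss lemma~\ref{Pliss lemma} to produce an unstable manifold of uniform size along $\gamma$.

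I expect the only real point of friction to be the uniformity --- one $\eta$ and one $T$ for every ergodic physical $\mu$ simultaneously, together with the $\phi_T$-ergodicity bookkeeping. This is handled by drawing $T,\eta_0$ from Proposition~\ref{Gibbs estimate}, whose constants are already uniform over all Gibbs $u$-states; by spending only $\eta_0/2$ of the estimate when passing from $\mu$ to the nearby periodic measure $\mu_\gamma$; and by noting that the weak-$*$ threshold $\varepsilon^{*}$ governing this passage is itself uniform, by compactness of the space of probability measures and continuity of $g$.
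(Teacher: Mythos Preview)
Your argument for the first and third items---that any ergodic physical measure is a hyperbolic Gibbs $u$-state, and that the uniform bound of Proposition~\ref{Gibbs estimate} transfers to a nearby periodic measure by weak-$*$ continuity of $\nu\mapsto\int g\,d\nu$---is correct and is essentially what the paper does.

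The gap is in the second item. Your claim that the periods produced by Katok's closing construction ``vary continuously with the shadowed periodic pseudo-orbit'' and can therefore be chosen outside the countable set $T\cdot\QQ$ is not justified by the black-box statement of Lemma~\ref{Katok shadowing}: the closing lemma shadows orbit segments whose good return times form a discrete sequence, and the resulting hyperbolic periodic orbits are isolated, so there is no continuous one-parameter family of periods to play with. Nothing prevents, a priori, all of the (countably many) periods produced from lying in $T\cdot\QQ$. Your fallback---replace $\mu_\gamma$ by a $\phi_T$-ergodic component with integral at most $-\eta$---does suffice for the application in Theorem~\ref{Gibbs expanding finite physical measure}, but it does not prove the lemma as stated, since the conclusion specifically asks that $\mu_\gamma$ itself be $\phi_T$-ergodic.

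The paper handles this point by reversing the order of choices: rather than fixing $T$ first and trying to adjust $\pi(\gamma)$, it fixes $T_0\in\NN$ and $\eta_0$ from Proposition~\ref{Gibbs estimate}, enumerates the countably many ergodic physical measures $\{\mu_\alpha\}_{\alpha\in I}$, chooses for each a periodic orbit $\gamma_\alpha$ related to $\mu_\alpha$ with $\int\log\|\psi_{-T_0}|_{\N^c}\|\,d\mu_{\gamma_\alpha}<-\eta_0$, and only then selects $T$. By Pugh--Shub \cite{PS}, for each $\alpha$ the set $\mathcal{T}_\alpha$ of $t$ for which $\mu_{\gamma_\alpha}$ fails to be $\phi_t$-ergodic is countable; since $\bigcup_\alpha\mathcal{T}_\alpha$ is still countable, one may pick $T\in[T_0,T_0+\delta)\setminus\bigcup_\alpha\mathcal{T}_\alpha$, and a short-time estimate $\log\|\psi_t|_{\N^c}\|<\varepsilon$ for $|t|<\delta$ carries the integral bound from $T_0$ to $T$ at the cost $\eta=\eta_0-\varepsilon$. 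Thus the flexibility is placed in $T$ rather than in the periods, and the crucial enabling observation is the countability of the family of ergodic physical measures.
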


\begin{proof}
Since $E^{c}$ is Gibbs sectional expanding, by Proposition \ref{Gibbs estimate}, there exist $T_0\in \NN$, $\eta_0>0$ such that 
\begin{equation}\label{ee}
\int \log \| \psi_{-T_0}|_{\N^{c}}\| d\mu<-\eta_0
\end{equation}
for any Gibbs $u$-state $\mu$. 

Choose $\varepsilon>0$ so that $\eta:=\eta_0-\varepsilon>0$. Since $\psi_t$ is close to identity as long as $t$ is close to zero, there exists $\delta>0$ such that 
\begin{equation}\label{ftl}
\log \|\psi_t|_{\N^c(x)}\|<\varepsilon,\quad \forall |t|<\delta, ~\forall x\in \Lambda.
\end{equation}

Let $\P=\{\mu_{\alpha}: \alpha\in I\}$ be the family of ergodic physical measures.  We know $I$ is countable by the definition of physical measure. 

Take any $\mu_{\alpha}\in \P$, then it is a Gibbs $u$-state\cite[Chapter 11]{BoD05}, and thus it satisfies (\ref{ee}). 
It follows from Lemma \ref{nz} that $\mu_{\alpha}$ is hyperbolic. In view of (\ref{ee}) and Lemma \ref{Katok shadowing}, one can take a hyperbolic periodic orbit $\gamma_{\alpha}$ such that 
\begin{itemize}
\smallskip
\item $\gamma_{\alpha}$ is related to $\mu_{\alpha}$;
\smallskip
\item $\mu_{\gamma_{\alpha}}$ is close enough to $\mu_{\alpha}$ so that 
\begin{equation}\label{eg}
\int\log\| \psi_{-T_0}|_{\N^{c}}\|d\mu_{\gamma_{\alpha}}<-\eta_0.
\end{equation}
\end{itemize} 
By the result of \cite{PS}, for every $\alpha\in I$, there exists a countable subset $\T_{\alpha}$ of $\RR$ such that $\mu_{\gamma_{\alpha}}$ is ergodic for $\phi_t$ whenever $t\in \RR\setminus \T_{\alpha}$. Observe that $\cup_{\alpha\in I}\T_{\alpha}$ is countable, we can fix any
$$T\in [T_0, T_0+\delta)\setminus \bigcup_{\alpha\in I}\T_{\alpha}.$$
As a consequence, $\mu_{\gamma_{\alpha}}$ is ergodic for $\phi_T$ for each $\alpha\in I$. Moreover, estimates (\ref{ftl}), (\ref{eg}) yield
\begin{eqnarray*}
\int\log\| \psi_{-T}|_{\N^{c}}\|d\mu_{\gamma_{\alpha}}&\le& \int\log\| \psi_{-T_0}|_{\N^{c}}\|d\mu_{\gamma_{\alpha}}+\int\log\| \psi_{-(T-T_0)}|_{\N^{c}}\|d\mu_{\gamma_{\alpha}}\\
&<&-\eta_0+\varepsilon\\
&=& -\eta<0.
\end{eqnarray*}
This completes the proof.
\end{proof}

Now we can show the finiteness of physical measures under the setting of Theorem \ref{TheB}.

\begin{theorem}\label{Gibbs expanding finite physical measure}
Let $\phi$ be the flow generated by a $C^2$ vector field $X$. Assume that $\Lambda$ is an attractor with partially splitting 
$$
T_{\Lambda}M=E^{uu}\oplus E^{c}\oplus E^{ss}.
$$ 
If $E^{c}$ is Gibbs sectional expanding, then there are finitely many SRB measures (which are physical) $\mu_{1},\mu_{2},\cdots,\mu_{k}$ supported on $\Lambda$ such that 
$$
\sum_{i=1}^{k}{\rm Leb}(B(\mu_{i}))={\rm Leb}(B(\Lambda)).
$$
\end{theorem}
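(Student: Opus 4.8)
The plan is to combine three ingredients already available: the existence statement with the basin‑covering property (Theorem~\ref{Gibbs expanding exist of physical measure}), the periodic approximation with uniform center data (Lemma~\ref{Th}), and the uniform size of unstable manifolds over such periodic orbits (Lemma~\ref{dim estimate}). From these, each ergodic physical measure will be forced to capture a uniform amount of Lebesgue measure in its basin; finiteness is then immediate from disjointness of basins, and the covering statement follows from the existence result.

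Concretely, let $\{\mu_\alpha\}_{\alpha\in I}$ be the family of ergodic physical measures supported on $\Lambda$, which is at most countable. Fix $\eta>0$ and $T>0$ provided by Lemma~\ref{Th}. For each $\alpha$ that lemma gives a hyperbolic periodic orbit $\gamma_\alpha$, related to $\mu_\alpha$, with $\mu_{\gamma_\alpha}$ ergodic for $\phi_T$ and $\int \log\|\psi_{-T}|_{\N^c}\|\,d\mu_{\gamma_\alpha}<-\eta$. Choosing a $\mu_{\gamma_\alpha}$-generic point for the ergodic map $\phi_{-T}$ on $\gamma_\alpha$ and applying the Pliss lemma (Lemma~\ref{Pliss lemma}) with constant $-\eta$ to the Birkhoff sums of $\log\|\psi_{-T}|_{\N^c}\|$ along its backward $\phi_T$-orbit, one obtains a point $p_\alpha\in\gamma_\alpha$ with
$$
\prod_{j=0}^{n-1}\big\|\psi_{-T}|_{\N^c_{\phi_{-jT}(p_\alpha)}}\big\|\le {\rm e}^{-\eta n},\qquad \forall\, n\ge 1.
$$
Lemma~\ref{dim estimate} then produces, inside $W^{u}(\gamma_\alpha)$, a disk $W^{u}_\delta(p_\alpha)$ through $p_\alpha$ tangent to $E^{cu}$, of dimension $\dim M-\dim E^{ss}$ and of size $\delta$, where $\delta$ depends only on $\eta$ and $T$; in particular $\delta$ is the same for all $\alpha\in I$.

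Next I would transfer the basin onto these uniform disks. Since $\gamma_\alpha$ is related to $\mu_\alpha$ and, as in the proof of Theorem~\ref{Gibbs expanding exist of physical measure}, the conditional measures of $\mu_\alpha$ along Pesin unstable manifolds are equivalent to Lebesgue with Lebesgue-a.e. point of such a manifold lying in $B(\mu_\alpha)$, an inclination‑lemma argument — run for the linear Poincar\'e flow, together with bounded distortion and absolute continuity of the unstable holonomies — shows that Lebesgue-a.e. point of $W^{u}_\delta(p_\alpha)$ belongs to $B(\mu_\alpha)$. Forming the saturation $D_\alpha=\bigcup_{y\in\phi_{[-s,s]}W^{u}_\delta(p_\alpha)}W^{ss}(y)$ for a small fixed $s>0$, and using that $B(\mu_\alpha)$ is flow‑invariant and strong‑stable‑saturated together with absolute continuity of the strong‑stable lamination (exactly as in the proof of Theorem~\ref{Gibbs expanding exist of physical measure}), we get ${\rm Leb}(D_\alpha\setminus B(\mu_\alpha))=0$ and ${\rm Leb}(D_\alpha)\ge c_0>0$, where $c_0$ depends only on $\delta$, on $s$, and on the uniform size of strong‑stable manifolds. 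Since distinct ergodic physical measures have disjoint basins, the sets $D_\alpha\cap B(\mu_\alpha)$ ($\alpha\in I$) are pairwise disjoint subsets of $M$, each of Lebesgue measure at least $c_0$; as ${\rm Leb}(M)<\infty$, the index set $I$ is finite, say $I=\{1,\dots,k\}$. The inequality $\sum_{i=1}^{k}{\rm Leb}(B(\mu_i))\le{\rm Leb}(B(\Lambda))$ is trivial, and for the reverse one invokes Theorem~\ref{Gibbs expanding exist of physical measure}: on a full‑Lebesgue‑measure subset $\Omega\subset B(\Lambda)$ every ergodic component of every measure in $\M(x)$ is one of $\mu_1,\dots,\mu_k$, and iterating a small center‑unstable plaque through $\phi_t(x)$ forward (its diameter grows, controlled via the linear Poincar\'e flow by the Gibbs sectional expanding condition) until a sub‑plaque $C^1$-shadows some $W^{u}_\delta(p_i)$, then transferring along the strong‑stable lamination, shows that Lebesgue-a.e. $x\in B(\Lambda)$ lies in some $B(\mu_i)$; hence equality holds.

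I expect the main obstacle to be the transfer step in the third paragraph: upgrading ``$\gamma_\alpha$ is related to $\mu_\alpha$'' to ``Lebesgue-a.e. point of the uniform-size disk $W^{u}_\delta(p_\alpha)$ lies in $B(\mu_\alpha)$''. The disk $W^{u}_\delta(p_\alpha)$ has uniform size but is not a Pesin unstable manifold of a $\mu_\alpha$-typical point, so one must run the $\lambda$-lemma together with the distortion and absolute-continuity estimates while keeping the relevant subspaces transverse to the flow direction $\langle X\rangle$ — that is, work with $\psi_t$ rather than $D\phi_t$ throughout. This is precisely the extra difficulty, absent for diffeomorphisms, that the linear Poincar\'e flow apparatus of the previous subsection is designed to overcome.
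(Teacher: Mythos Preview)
Your finiteness argument is essentially the paper's: periodic approximation (Lemma~\ref{Th}), Pliss selection, uniform disk from Lemma~\ref{dim estimate}, flow-and-$ss$ saturation, uniform lower bound on ${\rm Leb}(B(\mu_\alpha))$. One simplification the paper makes in the transfer step is worth noting: rather than proving that Lebesgue-a.e.\ point of the periodic disk $W^{u}_\delta(p_\alpha)$ itself lies in $B(\mu_\alpha)$, the paper takes a Pesin unstable leaf $W^u_{loc}(z)$ of a $\mu_\alpha$-typical point (on which basin-a.e.\ is already known), uses $W^u_{loc}(z)\pitchfork W^{ss}(y)\neq\emptyset$ and the $\lambda$-lemma to push a sub-disk forward until it is $C^1$-close to the cross-sectional disk $W^c(y)$ of size $\delta$, and works directly with that iterated disk $D_\delta$. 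This sidesteps the obstacle you flag, since $D_\delta$ is a forward image of a piece of a typical Pesin leaf, not the periodic orbit's own unstable manifold. The paper also records that $D_\delta\subset{\rm supp}(\mu_\alpha)$, a fact used crucially in the covering step.

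Your basin-covering sketch, however, does not match the paper and has a genuine gap. Pushing a $cu$-plaque through $x$ forward until a sub-plaque $C^1$-shadows some $W^u_\delta(p_i)$ tells you about \emph{most points of the plaque}, not about the single point $x$; and there is no a priori reason the growing plaque must approach the particular periodic orbits $\gamma_i$ (which need not lie in ${\rm supp}(\mu_i)$). The paper argues quite differently. From the previous step one has, for each $i$, an open set $U_i$ (the interior of $\widehat{D}_{\mu_i,\delta}$) with ${\rm Leb}(U_i\setminus B(\mu_i))=0$ and $U_i\cap{\rm supp}(\mu_i)\neq\emptyset$. For $x\in\Omega$ any $\mu\in\M(x)$ is a convex combination $\sum\alpha_i\mu_i$; if $\alpha_i,\alpha_j>0$ with $i\neq j$, then $U_i$ and $U_j$ both meet ${\rm supp}(\mu)$, so the forward orbit of $x$ visits each infinitely often, giving $\phi_{t_0}(U_i)\cap U_j\neq\emptyset$ for some $t_0$, hence $B(\mu_i)\cap B(\mu_j)\neq\emptyset$ and $\mu_i=\mu_j$, a contradiction. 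The same reasoning shows $\M(x)$ is a singleton. This is what yields $\Omega\subset\bigcup_i B(\mu_i)$ and the equality $\sum_i{\rm Leb}(B(\mu_i))={\rm Leb}(B(\Lambda))$.
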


\begin{proof}
%
Let $\eta>0$ and $T>0$ be the constants given by Lemma \ref{Th}. Let $\delta>0$ be the constant related to $T$ and $\eta$ given by Lemma \ref{dim estimate}.

For any ergodic physical measure $\nu$, by applying Lemma \ref{dim estimate}, one gets that
there exists a hyperbolic periodic orbit $\gamma$ such that 
\begin{itemize}
\smallskip
\item $\gamma$ is related to $\nu$;
\smallskip
\item $\mu_{\gamma}$ is ergodic for $\phi_T$ and satisfies 
$$
\int\log\| \psi_{-T}|_{\N^{c}}\|d\mu_{\gamma}<-\eta.
$$
\end{itemize} 
Therefore, the Birkhoff ergodic theorem implies that there is $x\in \gamma$ such that
$$
\lim_{n\to \infty}\frac{1}{n}\sum_{i=0}^{n-1}\log\| \psi_{-T}|_{\N^{c}_{\phi_{-iT}(x)}}\|<-\eta.
$$ 
Let 
$$
a_{i+1}=\log\| \psi_{-T}|_{\N^{c}_{\phi_{-iT}(x)}}\|,\ i\ge 0.
$$ 
By applying Lemma \ref{Pliss lemma}, there exists $k\in N$ such that $y=\psi_{-kT}(x)$ satisfies 
$$
\prod_{i=0}^{n-1}\left\| \psi_{-T}|_{\N^{c}_{\phi_{-iT}(y)}}\right\|<{\rm e}^{-\eta n},\ \forall n\in \NN.
$$
It follows from Lemma \ref{dim estimate} that there is a disk $W^{c}(y)$ at $y$ with dimension $\dim (\N^{c}\oplus \N^{u})$ and size $\delta$. Moreover, ${\rm diam}\left(P_{-t}(W^{c}(y))\right)\to 0$ as $t\to \infty.$ 
Because $\nu$ is an ergodic SRB measure which is s related to $\gamma$, there exists a Pesin unstable manifold $W^{u}_{loc}(z)$ such that 
\begin{enumerate}
\smallskip
\item\label{11} Lebesgue almost every point of $W^{u}_{loc}(z)$ belongs to $B(\nu)$;
\smallskip
\item\label{22} $W_{loc}^{u}(z)\pitchfork W^{s}(\gamma)\ne \emptyset$;
\smallskip
\item\label{333} $W_{loc}^{u}(z)\subset {\rm supp}(\nu)$.
\end{enumerate}
Moreover, from Item (\ref{11}), up to considering the iteration of $z$ under the flow, we may assume that $W_{loc}^{u}(z)\pitchfork W^{ss}(y)\ne \emptyset$. By applying $\lambda$-Lemma(see e.g. \cite[Theorem 5.7.2]{BG02}), there exists $T_0>0$ and a sub-disk $D_0$ of $W_{loc}^{u}(z)$ so that $D_{\delta}:=\phi_{T_0}(D_0)$ is $C^1$-close to $W^{c}(y)$ with radius $\delta$. Following Item (\ref{22}) and the invariance of $B(\nu)$, Lebesgue almost every point of $D_{\delta}$ belongs to $B(\nu)$.
Consider the set
$$
\widehat{D}_{\nu,\delta}=\bigcup_{x\in \phi_{[0,\delta]}D_{\delta}}W^{ss}(x),
$$
where $\phi_{[-\delta,\delta]}D_{\delta}=\left\{\phi_t(z): z\in D_{\delta}, t\in [-\delta,\delta]\right\}$. The absolute continuity of stable foliation implies that $B(\nu)$ has full Lebesgue measure in $\widehat{D}_{\nu,\delta}$. Since the size of $\widehat{D}_{\nu,\delta}$ is bounded below by a constant independent of $\nu$, there is $\zeta>0$ independent of $\nu$ so that 
$$
{\rm Leb}(B(\nu))\ge\zeta.
$$
As a consequence, we know there are finitely many ergodic physical measures. 
%
%

By Theorem \ref{Gibbs expanding exist of physical measure}, there exists a subset $\Omega\subset B(\Lambda)$ of full Lebesgue measure such that for any $x\in \Omega$, for any $\mu\in \M(x)$, every ergodic component of $\mu$ is a physical measure. By the above argument, the number of these ergodic physical measures is finite. Denote these physical measures by
$\mathscr{A}=\{\mu_1,\cdots,\mu_k\}$.
To show the basin covering property, it suffices to check the following claim: 
\begin{claim}\label{cl}
For any $x\in \Omega$, there exists $i\in \{1,2,\cdots,k\}$ such that $\mathcal{M}(x)=\{\mu_{i}\}$.
\end{claim}
\end{proof}
\begin{proof}[Proof of Claim \ref{cl}]
According to the above argument, for each $\mu_i\in \mathscr{A}$, one can find an open subset $U_i:={\rm int}\widehat{D}_{\mu_i,\delta}$ such that
\begin{enumerate}[(I)]
\smallskip
\item\label{sed} ${\rm Leb}(U_i)={\rm Leb}(U_i\cap B(\mu_{i}))$,
\smallskip
\item\label{sg} $U_i\cap {\rm supp}(\mu_{i})\ne \emptyset$ (recalling Item (\ref{333})).
\end{enumerate}

Take any $\mu\in \mathcal{M}(x)$. By Proposition \ref{Ergodic decomposition of flow}, there exist $\alpha_{i}\in [0, 1],\ 1\le i\le k$ such that 
$$
\mu=\sum_{i=1}^{k}\alpha_{i}\mu_{i},\quad \sum_{i=1}^{k}\alpha_{i}=1.
$$ 
We first prove that there exists some $i\in \{1,2,\cdots,k\}$ such that $\mu=\mu_{i}$. By contradiction, there exists $i,\ j\in \{1,2,\cdots,k\},\ i\ne j$ such that $\alpha_{i}>0, \alpha_{j}>0$. 
By (\ref{sg}), one can choose $y\in {\rm supp}(\mu_{i})\cap U_i$ and $z\in {\rm supp}(\mu_{j})\cap U_j$, then $y,z\in {\rm supp}(\mu)$. By then construction of $\mu$, $\phi_t(x)$ visits any fixed neighborhoods of $y,z$ for infinitely many times.
As a result, there is $t_{0}>0$ such that 
$$
\phi_{t_{0}}(U_{i})\cap U_{j}\ne \emptyset.
$$ 
Then the invariance of basin and (\ref{sed}) imply $B(\mu_{i})\cap B(\mu_{j})=\emptyset$, so $\mu_i=\mu_j$, which contradicts to our assumption. 

\medskip
Now we suppose $\mu_{l},\ \mu_{m}\in \mathcal{M}(x)$, following (\ref{sed}), with the same argument one can deduce that 
there exists $t_{1}>0$ such that 
$$
\phi_{t_{1}}(U_{l})\cap U_{m}\ne \emptyset.
$$
Applying (\ref{sg}) again, we get $\mu_{l}=\mu_{m}$, i.e., $l=m$. This completes the proof.

\end{proof}

Now we prove the uniqueness of SRB(physical) measures under the assumption of transitivity.

\begin{proof}[Proof of Corollary \ref{uniqueness}]
By Theorem \ref{TheB}, there exist finitely many SRB(physical) measures $\mu_1,\cdots,\mu_k$ supported on $\Lambda$, whose union of basins cover a full Lebesgue measure. Moreover, for each $\mu_i$, $1\le i \le k$, there is an open neighborhood $U_i$ containing points from $\Lambda$, it satisfies that ${\rm Leb}(U_i)={\rm Leb}(U_i\cap B(\mu_i))$. Then the transitivity of $\phi$, and invariance of basins implies that there exists only one SRB(physical) measure supported on $\Lambda$ with basin covering property.
\end{proof}

\end{document}